\numberwithin{equation}{section}
\newtheorem{theorem}[equation]{Theorem}
\newtheorem{rem}[equation]{Remark}
\newtheorem{lemma}[equation]{Lemma}
\newtheorem{corollary}[equation]{Corollary}
\title{Weyl-type bounds for Steklov eigenvalues}
\author{Luigi Provenzano\footnote{\scriptsize{EPFL,\,MATHGEOM-FSB,\,Station 8,\,CH-1015 Lausanne,\,Switzerland.\,Email:\,luigi.provenzano@epfl.ch}} and Joachim Stubbe\footnote{\scriptsize{EPFL,\,MATHGEOM-FSB,\,Station 8,\,CH-1015 Lausanne,\,Switzerland.\,Email:\,joachim.stubbe@epfl.ch}}}%
\begin{document}

\newcommand{\rea}{\mathbb{R}}

\maketitle

\noindent
{\bf Abstract:}
We present upper and lower bounds for Steklov eigenvalues for domains in $\mathbb{R}^{N+1}$ with $C^2$ boundary compatible with the Weyl asymptotics. In particular, we obtain sharp upper bounds on Riesz-means and the trace of corresponding Steklov heat kernel. The key result is a comparison of Steklov eigenvalues and Laplacian eigenvalues on the boundary of the domain by applying Pohozaev-type identities on an appropriate tubular neigborhood of the boundary and the min-max principle. Asymptotically sharp bounds then follow from bounds for Riesz-means of Laplacian eigenvalues.

\vspace{11pt}

\noindent
{\bf Keywords:}  Steklov eigenvalue problem, Laplace-Beltrami operator, Eigenvalue bounds, Weyl eigenvalue asymptotics, Riesz-means, min-max principle, distance to the boundary, tubular neighborhood.

\vspace{6pt}
\noindent
{\bf 2000 Mathematics Subject Classification:} Primary 35P15; Secondary 35J25, 35P20, 58C40.
%




\section{Introduction.} Let $\Omega\subset\mathbb{R}^{N+1}$ be a bounded domain with boundary $\partial\Omega$. We consider the Steklov eigenvalue problem on $\Omega$:
\begin{equation}\label{Steklov}
\begin{cases}
\Delta u=0, & {\rm in\ }\Omega,\\
\frac{\partial u}{\partial\nu}=\sigma u, & {\rm on\ }\partial\Omega,
\end{cases}
\end{equation}
where $\frac{\partial u}{\partial\nu}=\nabla u\cdot\nu$ denotes the derivative of $u$ in the direction of the outward unit normal $\nu$ to $\partial\Omega$. A classical reference for problem \eqref{Steklov} is \cite{steklov} where it was introduced to describe the stationary heat distribution in a body whose flux through the boundary is proportional to the temperature on the boundary. When $N=1$ problem \eqref{Steklov} can be intepreted as the equation of a free membrane the mass of which is concentrated at the boundary (see \cite{lambertiprovenzano1}). The eigenvalues of problem \eqref{Steklov} can be also seen as the eigenvalues of the Dirichlet-to-Neumann map (see e.g., the survey paper \cite{girouardpolterovich}). We also mention that recently the analogue of the Steklov problem has been introduced for the biharmonic operator as well in \cite{buosoprovenzano} (see also \cite{bcp}).

It is well known that under mild regularity conditions on the boundary $\partial\Omega$ (see e.g., \cite{girouardpolterovich} for a detailed discussion), in particular if $\partial\Omega$ is piecewise $C^1$, problem  \eqref{Steklov} admits an increasing sequence of non-negative eigenvalues of the form
\begin{equation*}
0=\sigma_0<\sigma_1\leq\sigma_2\leq\cdots\nearrow+\infty,
\end{equation*}
where the eigenvalues are repeated according to their multiplicity and satisfy the Weyl asymptotic formula (see \cite{Agr2005})

\begin{equation}\label{steklov-ev-asymptotics}
  \underset{j\to\infty}{\lim}\sigma_jj^{\,-1/N}=2\pi\,B_N^{-1/N}|\partial\Omega|^{-1/N},
\end{equation}
with $|\partial\Omega|$ denoting the $N$-dimensional measure of $\partial\Omega$ and $\displaystyle B_N=\frac{\pi^{N/2}}{\Gamma(1+N/2)}$ being
the volume of the $N$-dimensional unit ball. It is an open problem to find bounds on $\sigma_j$ compatible with the Weyl-limit \eqref{steklov-ev-asymptotics} except when $N=1$ and $\partial\Omega$ is smooth (see \cite{hersch}; see also \cite{colboisgirouard} and the survey article \cite{girouardpolterovich}). The situation is different when we consider the eigenvalue problem for the Laplace-Beltrami operator on $\partial\Omega$, that is
\begin{equation}\label{Laplace-Beltrami}
-\Delta_{\partial\Omega}\, \varphi=\lambda \varphi \quad{\rm on\ }\partial\Omega,
\end{equation}
which for a connected and sufficiently regular $\partial\Omega$ (see Remark \ref{regularityremark}) admits an increasing sequence of non-negative eigenvalues of the form
\begin{equation*}
0=\lambda_0<\lambda_1\leq\lambda_2\leq\cdots\nearrow+\infty,
\end{equation*}
satisfying the Weyl asymptotic formula
\begin{equation}\label{laplace-beltrami-ev-asymptotics}
  \underset{j\to\infty}{\lim}\lambda_jj^{\,-2/N}=(2\pi)^2\,B_N^{-2/N}|\partial\Omega|^{-2/N}
\end{equation}
and Weyl-type bounds of the form (see e.g., \cite{buser}, \cite{colbounding})
\begin{equation}\label{laplace-beltrami-ev-weyl-upper-bound}
  \lambda_j\leq a_{\partial\Omega} +b_Nj^{\,2/N}|\partial\Omega|^{-2/N}
\end{equation}
for some positive constants $a_{\partial\Omega}, b_N$ depending only on the geometry and the dimension of the manifold $\partial\Omega$. We refer to \cite{chavel} for an introduction to eigenvalue problems for the Laplace-Beltrami operator on Riemannian manifolds and to \cite{colbounding,colboisconformal,colboisgirouard,hassan} and to the references therein for a more detailed discussion on upper bounds for the eigenvalues of the Laplacian on manifolds.

The above asymptotic formulas suggest that at least for large $j$ the Steklov eigenvalues $\sigma_j$ are related to the Laplacian eigenvalues $\lambda_j$ approximately via
\begin{equation}\label{steklov-laplace-beltrami-asymptotic-relation}
  \sigma_j\approx \sqrt{\lambda_j}.
\end{equation}
The main result of our paper is a comparison between Steklov and Laplacian eigenvalues for all $j$ compatible with the asymptotic relation \eqref{steklov-laplace-beltrami-asymptotic-relation}.
\begin{theorem}\label{main-theorem} Let $\Omega\subset\mathbb{R}^{N+1}$ be a bounded domain with boundary $\partial\Omega$ of class $C^2$ such that $\partial\Omega$ has only one connected component. Then there exists a constant $c_{\Omega}$ such that for all $j\in\mathbb N$
\begin{equation}\label{steklov-laplace-beltrami-ev-comparison-1}
  \lambda_j\leq \sigma_j^2+2c_{\Omega}\sigma_j, \quad  \sigma_j\leq c_{\Omega}+\sqrt{c_{\Omega}^2+\lambda_j}\,.
\end{equation}
In particular,
\begin{equation}\label{steklov-laplace-beltrami-ev-comparison-2}
 \big| \sigma_j-\sqrt{\lambda_j}\big|\leq 2c_{\Omega}.
\end{equation}
\end{theorem}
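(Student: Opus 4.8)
The plan is to reduce both inequalities in \eqref{steklov-laplace-beltrami-ev-comparison-1} to a single comparison of quadratic forms on $\partial\Omega$ and then to invoke the min-max principle twice. Writing $u$ for the harmonic extension to $\Omega$ of a function $\varphi$ on $\partial\Omega$, the central estimate I would establish is that there exists a constant $c_\Omega>0$, depending only on the geometry of $\partial\Omega$, such that
\begin{equation*}
\Bigl|\int_{\partial\Omega}|\nabla_{\partial\Omega}\varphi|^2\,dS-\int_{\partial\Omega}\Bigl(\tfrac{\partial u}{\partial\nu}\Bigr)^2\,dS\Bigr|\le 2c_\Omega\int_{\Omega}|\nabla u|^2\,dx .
\end{equation*}
Since $\int_{\Omega}|\nabla u|^2\,dx=\int_{\partial\Omega}\varphi\,\tfrac{\partial u}{\partial\nu}\,dS$ and $\tfrac{\partial u}{\partial\nu}$ is the image of $\varphi$ under the Dirichlet-to-Neumann operator $\mathcal D$ whose eigenvalues are the $\sigma_j$, this single estimate encodes, at the level of quadratic forms, the two-sided comparison $\mathcal D^2-2c_\Omega\mathcal D\le-\Delta_{\partial\Omega}\le\mathcal D^2+2c_\Omega\mathcal D$ between the forms of $-\Delta_{\partial\Omega}$ and of $\mathcal D$.

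From the right-hand inequality I would obtain the first bound in \eqref{steklov-laplace-beltrami-ev-comparison-1}. Taking the first $j+1$ Steklov eigenfunctions $u_0,\dots,u_j$, whose traces $\varphi_0,\dots,\varphi_j$ are $L^2(\partial\Omega)$-orthonormal and satisfy $\partial u_i/\partial\nu=\sigma_i\varphi_i$, I feed $W=\mathrm{span}\{\varphi_0,\dots,\varphi_j\}$ into the min-max characterisation of $\lambda_j$. For $\varphi=\sum_i a_i\varphi_i$ one has $\int_{\partial\Omega}(\partial_\nu u)^2=\sum_i a_i^2\sigma_i^2$ and $\int_\Omega|\nabla u|^2=\sum_i a_i^2\sigma_i$, so the estimate gives $\int_{\partial\Omega}|\nabla_{\partial\Omega}\varphi|^2\le\sum_i a_i^2\sigma_i^2+2c_\Omega\sum_i a_i^2\sigma_i\le(\sigma_j^2+2c_\Omega\sigma_j)\sum_i a_i^2$, since $\sigma_i\le\sigma_j$ for $i\le j$; as $\sum_i a_i^2=\int_{\partial\Omega}\varphi^2$, min-max yields $\lambda_j\le\sigma_j^2+2c_\Omega\sigma_j$. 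For the second bound I run the argument in the other direction, testing the min-max for $\sigma_j$ with the span of the first $j+1$ Laplace-Beltrami eigenfunctions. Writing $P=\int_{\partial\Omega}\varphi^2$, $D=\int_\Omega|\nabla u|^2$ and using the left-hand inequality together with the Cauchy-Schwarz bound $D=\int_{\partial\Omega}\varphi\,\partial_\nu u\le P^{1/2}(\int_{\partial\Omega}(\partial_\nu u)^2)^{1/2}$, I get $D^2/P-2c_\Omega D\le\int_{\partial\Omega}|\nabla_{\partial\Omega}\varphi|^2\le\lambda_j P$. With $r=D/P$ the Steklov Rayleigh quotient this reads $r^2-2c_\Omega r\le\lambda_j$, whence $r\le c_\Omega+\sqrt{c_\Omega^2+\lambda_j}$ and so $\sigma_j\le c_\Omega+\sqrt{c_\Omega^2+\lambda_j}$. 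The inequality \eqref{steklov-laplace-beltrami-ev-comparison-2} is then purely algebraic: the two bounds give $|\sigma_j^2-\lambda_j|\le 2c_\Omega\sigma_j$, and dividing by $\sigma_j+\sqrt{\lambda_j}\ge\sigma_j$ yields $|\sigma_j-\sqrt{\lambda_j}|\le 2c_\Omega$.

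The heart of the matter is the displayed form estimate, which I would prove by a Rellich-Pohozaev identity localised in a tubular neighborhood. Since $\partial\Omega$ is $C^2$, the distance function $d(x)=\mathrm{dist}(x,\partial\Omega)$ is $C^2$ on a collar $\Omega_h=\{0<d<h\}$, with $\nabla d$ the inward unit normal and $D^2d$ bounded in terms of the principal curvatures. Choosing a cut-off $\eta$ with $\eta(0)=1$ and $\mathrm{supp}\,\eta\subset[0,h)$ and setting $X=-\eta(d)\nabla d$, one has $X=\nu$ on $\partial\Omega$ and $X$ supported in $\Omega_h$. For harmonic $u$ the identity
\begin{equation*}
\int_{\partial\Omega}\bigl[(X\cdot\nu)|\nabla u|^2-2(X\cdot\nabla u)\,\partial_\nu u\bigr]\,dS=\int_{\Omega}\bigl[(\mathrm{div}\,X)|\nabla u|^2-2(DX\,\nabla u)\cdot\nabla u\bigr]\,dx
\end{equation*}
holds, and on $\partial\Omega$ the splitting $|\nabla u|^2=(\partial_\nu u)^2+|\nabla_{\partial\Omega}\varphi|^2$ turns the left-hand integrand into $|\nabla_{\partial\Omega}\varphi|^2-(\partial_\nu u)^2$. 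The right-hand side is supported in $\Omega_h$ and bounded in absolute value by $2c_\Omega\int_{\Omega_h}|\nabla u|^2\le 2c_\Omega\int_\Omega|\nabla u|^2$, where $2c_\Omega:=\sup_{\Omega_h}(|\mathrm{div}\,X|+2\|DX\|)$ is finite and fixed by the curvature bounds of $\partial\Omega$ and the choice of $\eta$. This is precisely the form estimate.

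The main obstacle I expect is the rigorous justification of this identity and of the bulk bound. Two points require care: first, the Rellich-Pohozaev identity presupposes enough regularity of $u$ up to $\partial\Omega$ (so that $D^2u$ and the boundary splitting are meaningful), which for a merely $C^2$ domain I would secure by working with the Steklov and Laplace-Beltrami eigenfunctions appearing in the min-max, smooth up to the boundary by elliptic regularity, and by a density argument; second, the constant $c_\Omega$ must be genuinely geometric, so I would check that $|\mathrm{div}\,X|$ and $\|DX\|$ are controlled uniformly on $\Omega_h$ by the $C^2$-data of $\partial\Omega$ and by the reach of the boundary, which in turn fixes an admissible $h$. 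Once the form estimate is in place, the remaining steps are the routine min-max manipulations carried out above.
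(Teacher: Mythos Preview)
Your proposal is correct and follows essentially the same route as the paper: a Pohozaev--Rellich identity with a vector field equal to $\nu$ on $\partial\Omega$ and supported in a tubular collar yields the two-sided comparison $\bigl|\int_{\partial\Omega}|\nabla_{\partial\Omega}\varphi|^2-\int_{\partial\Omega}(\partial_\nu u)^2\bigr|\le 2c_\Omega\int_\Omega|\nabla u|^2$, and then min-max with Steklov (resp.\ Laplace--Beltrami) eigenfunctions as test functions gives the two inequalities in \eqref{steklov-laplace-beltrami-ev-comparison-1}. The only noteworthy difference is the choice of vector field: the paper takes the specific $F=\nabla\bigl(\tfrac12\,\mathrm{dist}(\,\cdot\,,\Gamma_h)^2\bigr)$, whose Jacobian $D^2\eta$ is symmetric with eigenvalues expressible through the principal curvatures of $\partial\Omega$, which in turn yields the explicit constant $c_\Omega=\tfrac{1}{2\bar h}+\tfrac{N\bar H_\infty}{2}$; your generic cutoff $X=-\eta(d)\nabla d$ gives the same structure but a less explicit constant.
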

The constant $c_{\Omega}$ has the dimension of an inverse length and depends explicitely on the dimension $N$, the maximum of the  mean of the absolute values of the principal curvatures $\kappa_i(x)$, $i=1,\ldots, N$, on $\partial\Omega$ and the maximal possible size $\bar h$ of a suitable tubular neighborhood about $\partial\Omega$. For convex domains $\Omega$ we shall improve the estimates \eqref{steklov-laplace-beltrami-ev-comparison-1} such that they become sharp for all $j$  when $\Omega$ is a ball of radius $R$ and give the exact relation
\begin{equation*}
 \lambda_j=\sigma_j^2+\frac{N-1}{R}\,\sigma_j
\end{equation*}
between Steklov and Laplacian eigenvalues on the $N$-dimensional ball and $N$-dimensional sphere of radius $R$ respectively.
\\
Clearly Theorem \ref{main-theorem} implies Weyl-type estimates for Steklov eigenvalues from the bounds \eqref{laplace-beltrami-ev-weyl-upper-bound} for Laplacian eigenvalues (see Corollary \ref{steklov_upper_bounds_thm}). Combining the sharp Weyl-type estimates for Laplacian eigenvalues on hypersurfaces obtained in \cite{HaSt11} with the estimates of Theorem \ref{main-theorem} we prove the following sharp bound for Riesz means of Steklov eigenvalues:
\begin{theorem}\label{Riesz-mean-theorem} Let $\Omega\subset\mathbb{R}^{N+1}$ be a bounded domain with boundary $\partial\Omega$ of class $C^2$ such that $\partial\Omega$ has only one connected component. Then for all $z\geq 0$
\begin{equation}\label{Steklov-ev-Riesz-mean-upper-bound}
  \sum_{j=0}^{\infty}(z-\sigma_j)_{+}^2\leq \frac{2}{(N+1)(N+2)}\,(2\pi)^{-N}B_N|\partial\Omega| \big(z+c_{\Omega}\big)^{N+2}
\end{equation}
where $c_{\Omega}$ is the constant from Theorem \ref{main-theorem}
\end{theorem}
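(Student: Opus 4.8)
The plan is to transport the sharp Riesz-mean bound for the Laplace--Beltrami eigenvalues $\lambda_j$ to the Steklov eigenvalues $\sigma_j$ by means of the comparison in Theorem \ref{main-theorem}. First I would rewrite the first inequality in \eqref{steklov-laplace-beltrami-ev-comparison-1} by completing the square: $\lambda_j\leq \sigma_j^2+2c_{\Omega}\sigma_j$ is equivalent to $(\sigma_j+c_{\Omega})^2\geq \lambda_j+c_{\Omega}^2\geq\lambda_j$, hence $\sigma_j\geq\sqrt{\lambda_j}-c_{\Omega}$ for every $j$. Setting $w:=z+c_{\Omega}$, this yields $z-\sigma_j\leq w-\sqrt{\lambda_j}$ and therefore the pointwise domination $(z-\sigma_j)_{+}\leq (w-\sqrt{\lambda_j})_{+}$ (the case $\sigma_j\geq z$ being trivial). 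Squaring and summing reduces the theorem to the estimate
\[
\sum_{j}(w-\sqrt{\lambda_j})_{+}^2\leq \frac{2}{(N+1)(N+2)}\,(2\pi)^{-N}B_N|\partial\Omega|\, w^{N+2},
\]
that is, to a second Riesz mean of the operator $\sqrt{-\Delta_{\partial\Omega}}$, whose eigenvalues are $\sqrt{\lambda_j}$.

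The key observation is an elementary identity expressing this quantity through the first Riesz mean of $-\Delta_{\partial\Omega}$ itself. For a single eigenvalue $\lambda\leq w^2$ a direct integration gives
\[
(w-\sqrt{\lambda})_{+}^2=\frac{w}{2}\int_0^{w^2}t^{-3/2}(t-\lambda)_{+}\,dt,
\]
while both sides vanish for $\lambda>w^2$. Summing over $j$ and interchanging sum and integral (legitimate by Tonelli, all integrands being non-negative) yields
\[
\sum_{j}(w-\sqrt{\lambda_j})_{+}^2=\frac{w}{2}\int_0^{w^2}t^{-3/2}\Big(\sum_j(t-\lambda_j)_{+}\Big)\,dt.
\]
Thus the problem is reduced to inserting, under the weight $t^{-3/2}$, a pointwise-in-$t$ upper bound for the Laplacian Riesz mean $\sum_j(t-\lambda_j)_{+}$.

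Here I would invoke the sharp Weyl-type (Berezin--Li--Yau-type) bound for the Laplace--Beltrami operator on the hypersurface $\partial\Omega$ from \cite{HaSt11}, in the form $\sum_j(t-\lambda_j)_{+}\leq \frac{2}{N+2}(2\pi)^{-N}B_N|\partial\Omega|\,t^{(N+2)/2}$ for all $t\geq 0$. Substituting this into the identity and evaluating the elementary integral $\int_0^{w^2}t^{(N-1)/2}\,dt=\frac{2}{N+1}\,w^{N+1}$ gives
\[
\sum_j(w-\sqrt{\lambda_j})_{+}^2\leq \frac{w}{2}\cdot\frac{2}{N+2}(2\pi)^{-N}B_N|\partial\Omega|\cdot\frac{2}{N+1}\,w^{N+1}=\frac{2}{(N+1)(N+2)}(2\pi)^{-N}B_N|\partial\Omega|\,w^{N+2},
\]
which is exactly the claimed bound with $w=z+c_{\Omega}$. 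The factor $1/(N+1)$ produced by the weighted integral is precisely what turns the Laplacian constant $\frac{2}{N+2}$ into the Steklov constant $\frac{2}{(N+1)(N+2)}$, so the leading order is Weyl-sharp and collapses to the exact value on the ball.

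The step requiring most care is the precise form of the Laplacian bound borrowed from \cite{HaSt11}: the identity above is sensitive to the shape of $t\mapsto\sum_j(t-\lambda_j)_{+}$ under the singular weight $t^{-3/2}$, so a clean power law $t^{(N+2)/2}$ is needed to reproduce the stated constant. If the hypersurface bound of \cite{HaSt11} carries an additional curvature correction (a shift of $t$ by a multiple of the squared mean curvature), this correction must be tracked through the weighted integral and absorbed into $c_{\Omega}$; checking that the constant $c_{\Omega}$ of Theorem \ref{main-theorem} already dominates it is the only genuinely delicate point. The remaining analytic issues are routine: convergence at $t=0$ holds because $\sum_j(t-\lambda_j)_{+}=t$ for small $t$, so the integrand behaves like $t^{-1/2}$, and the interchange of summation and integration is justified by non-negativity.
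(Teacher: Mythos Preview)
Your reduction via $(z-\sigma_j)_+\leq (w-\sqrt{\lambda_j})_+$ with $w=z+c_\Omega$, and the identity $(w-\sqrt{\lambda})_+^2=\frac{w}{2}\int_0^{w^2}t^{-3/2}(t-\lambda)_+\,dt$, are both correct and elegant. The proof breaks, however, at the input you borrow from \cite{HaSt11}. The Berezin--Li--Yau form $\sum_j(t-\lambda_j)_+\leq \frac{2}{N+2}(2\pi)^{-N}B_N|\partial\Omega|\,t^{(N+2)/2}$ that you invoke \emph{cannot} hold on a closed manifold: since $\lambda_0=0$, for $0<t<\lambda_1$ the left side equals $t$ while the right side is $O(t^{1+N/2})=o(t)$. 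You anticipate this and propose absorbing a curvature shift into $c_\Omega$, but that is precisely where the argument collapses: any shifted bound $\sum_j(t-\lambda_j)_+\leq C(t+z_0)^{(N+2)/2}$ with $z_0>0$ produces, under your weight $t^{-3/2}$, an integrand $\sim z_0^{(N+2)/2}t^{-3/2}$ near $t=0$, which is not integrable. An integration by parts to the second Riesz mean only makes matters worse (weight $t^{-5/2}$). So the shift cannot simply be ``tracked through the weighted integral''; the singularity of your kernel at the origin destroys any bound that does not vanish faster than $t^{1/2}$ there, and the shifted Harrell--Stubbe bounds do not.

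The paper avoids this obstruction by working the comparison the other way round. It keeps the Steklov Riesz means as the unknowns and uses $\lambda_j\leq\sigma_j^2+2c_\Omega\sigma_j$ to bound the Laplacian first Riesz mean from \emph{below}, obtaining a differential inequality for $\zeta\mapsto \sum_j(\zeta-\sigma_j)_+^2/(\zeta+c_\Omega)$. Integrating and performing one integration by parts converts the right side into the \emph{second} Laplacian Riesz mean $\sum_j(\zeta^2+2c_\Omega\zeta-\lambda_j)_+^2$, which is what \cite{HaSt11} actually bounds. Since the argument is $\zeta^2+2c_\Omega\zeta=(\zeta+c_\Omega)^2-c_\Omega^2$ and the Harrell--Stubbe shift satisfies $z_0=\frac{N^2}{4}H_\infty^2\leq c_\Omega^2$, the shift is absorbed exactly into $(\zeta+c_\Omega)^{N+4}$ and the remaining integral is elementary. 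This algebraic matching between the quadratic substitution and the shift is the mechanism that produces the sharp constant, and it is what your direct route through $\sqrt{\lambda_j}$ loses.
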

The estimate \eqref{Steklov-ev-Riesz-mean-upper-bound} is asymptotically sharp since
\begin{equation*}
   \underset{z\to\infty}{\lim}z^{-N-2}\sum_{j=0}^{\infty}(z-\sigma_j)_{+}^2=\frac{2}{(N+1)(N+2)}\,(2\pi)^{-N}B_N|\partial\Omega|
\end{equation*}
according to \eqref{steklov-ev-asymptotics}. Theorem \ref{Riesz-mean-theorem} implies sharp upper bounds on the trace of the associated heat kernel (see Corollary \ref{Steklov-heat-trace-upper-bound-cor}) as well as lower bounds on the eigenvalues (see Corollary \ref{Steklov-lower-bound}).

The present paper is organized as follows: in Section \ref{sec:pre} we recall some properties of the squared distance function from the boundary in a suitable tubular neighborhood of a $C^2$ domain. We exploit these properties in Section \ref{sec:har} in order to obtain estimates of boundary integrals of harmonic functions. In particular, we establish a comparison between the $L^2(\partial\Omega)$ norms of the normal derivative and of the tangential gradient of harmonic functions which is used in Section \ref{sec:comp} together with the min-max principle to prove our main Theorem \ref{main-theorem} and, as a consequence, Weyl-type upper bounds for Steklov eigenvalues. In Section \ref{ex} we consider the case of convex $C^2$ domains for which we refine the estimates \eqref{steklov-laplace-beltrami-ev-comparison-1}, which become sharp in the case of the ball. Finally, in Section \ref{riesz} we prove Theorem \ref{Riesz-mean-theorem} as well as upper bounds on the trace of the Steklov heat kernel and lower bounds on Steklov eigenvalues which turn out to be asymptotically sharp.

\section{The squared distance function from the boundary}\label{sec:pre}

In this section we collect a number of properties of the distance and squared distance functions from the boundary $\partial\Omega$ of a $C^2$ domain of $\mathbb R^{N+1}$ which will be used in the proof of the main result. 

We set
$$
d_0(x):={\rm dist}(x,\partial\Omega).
$$
Let $x\in\partial\Omega$ and let $\nu(x)$ denote the outward unit normal to $\partial\Omega$ at $x$. We have the following characterization of $\nu(x)$ in terms of $d_0(x)$:
\begin{lemma}\label{normal}
Let $\Omega$ be a bounded domain in $\mathbb R^{N+1}$ of class $C^2$. Then for $x\in\partial\Omega$
\begin{equation*}
\nu(x)=-\nabla d_0(x).
\end{equation*}
\end{lemma}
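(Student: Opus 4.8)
The plan is to exploit two elementary facts about the distance function together with the $C^2$ regularity of the boundary: that $d_0$ is $1$-Lipschitz on $\mathbb{R}^{N+1}$, and that along the inward normal line issuing from a boundary point $d_0$ grows exactly linearly with slope $1$. Combining these through the Cauchy--Schwarz inequality pins down $\nabla d_0(x)$ completely.

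First I would fix $x_0\in\partial\Omega$ and record that, being a distance function, $d_0(y)=\text{dist}(y,\partial\Omega)$ is $1$-Lipschitz; hence wherever it is differentiable one has $|\nabla d_0|\le 1$. To speak of $\nabla d_0(x_0)$ at a boundary point at all, I must first guarantee differentiability of $d_0$ there. This is where the $C^2$ hypothesis enters: for a $C^2$ domain the normal map $\Phi(x,t)=x-t\,\nu(x)$, with $x\in\partial\Omega$ and $t\in\mathbb{R}$, is a $C^1$ diffeomorphism from a neighborhood of $\partial\Omega\times\{0\}$ onto a tubular neighborhood of $\partial\Omega$, by the implicit function theorem and the fact that $\nu\in C^1$. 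Consequently every point $y\in\Omega$ sufficiently close to $\partial\Omega$ has a \emph{unique} nearest boundary point, and the (inner) distance function $d_0$ is of class $C^1$ up to $\partial\Omega$ on this one-sided tubular neighborhood; equivalently, the signed distance is $C^1$ across $\partial\Omega$. This regularity is the genuine technical content and the main obstacle, since the unsigned distance function generically has a crease along $\partial\Omega$; I would either invoke the standard result (e.g. Gilbarg--Trudinger, Lemma 14.16) or derive it directly from the invertibility of $\Phi$.

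Next I would compute the directional derivative of $d_0$ at $x_0$ along the inward normal $-\nu(x_0)$. Setting $\gamma(t)=x_0-t\,\nu(x_0)$ with $t>0$ small, the unique-nearest-point property gives that $x_0$ is precisely the foot point of $\gamma(t)$, so that $d_0(\gamma(t))=|\gamma(t)-x_0|=t$. Differentiating at $t=0^{+}$ then yields $\nabla d_0(x_0)\cdot(-\nu(x_0))=1$.

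Finally I would close the argument by Cauchy--Schwarz: since $|\nu(x_0)|=1$ and $|\nabla d_0(x_0)|\le 1$,
$$ 1=\nabla d_0(x_0)\cdot(-\nu(x_0))\le |\nabla d_0(x_0)|\,|\nu(x_0)|\le 1, $$
so equality holds throughout. Equality in the Cauchy--Schwarz step forces $\nabla d_0(x_0)$ and $-\nu(x_0)$ to be positively parallel, while equality in $|\nabla d_0(x_0)|\le 1$ fixes the length, giving $\nabla d_0(x_0)=-\nu(x_0)$, that is $\nu(x_0)=-\nabla d_0(x_0)$. The sign is consistent with the geometry: $d_0$ increases as one moves into $\Omega$, so its gradient points inward, opposite to the outward normal.
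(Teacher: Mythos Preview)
The paper does not give its own proof of this lemma; it simply refers to \cite[Ch.~7, Theorem~8.5]{shapes}. Your argument is correct and is the standard elementary one: establish one-sided $C^1$ regularity of $d_0$ up to $\partial\Omega$ via the tubular neighborhood construction (the paper itself invokes \cite[Lemma~14.16]{gitr} for exactly this in Theorem~\ref{tubular}), compute the inward normal derivative directly from the unique-nearest-point property, and then saturate Cauchy--Schwarz using the $1$-Lipschitz bound. You have also correctly identified the only real subtlety, namely that the unsigned distance has a crease across $\partial\Omega$, so $\nabla d_0(x)$ for $x\in\partial\Omega$ must be understood as the gradient of the restriction of $d_0$ to $\overline{\Omega}$, extended continuously to the boundary.
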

We refer to \cite[Ch.7, Theorem 8.5]{shapes} for the proof of Lemma \ref{normal}. Let $h>0$. The $h$-tubular neighborhood $\omega_h$ of $\partial\Omega$ is defined as
\begin{equation}\label{tubular_n}
\omega_h:=\left\{x\in\Omega:d_0(x)< h\right\}
\end{equation}
We have the following:
\begin{theorem}\label{tubular0}
Let $\Omega$ be a bounded domain in $\mathbb R^{N+1}$ of class $C^2$. Then there exists $h>0$ such that every point in $\omega_h$ has a unique nearest point on $\partial\Omega$.
\end{theorem}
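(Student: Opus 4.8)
The plan is to realize the nearest-point projection through the normal map and then combine the inverse function theorem with the compactness of $\partial\Omega$. Since $\partial\Omega$ is a compact hypersurface of class $C^2$, the outward unit normal field $\nu$ is of class $C^1$, so we may consider the normal map
\[
\Phi:\partial\Omega\times\mathbb R\to\mathbb R^{N+1},\qquad \Phi(x,t)=x-t\,\nu(x).
\]
First I would compute the differential of $\Phi$ at a point $(x,0)$. Restricted to $T_x\partial\Omega$ it sends a tangent vector $v$ to $v-t\,d\nu_x(v)$, which at $t=0$ is just $v$, while $\partial_t\Phi(x,0)=-\nu(x)$. Hence $d\Phi_{(x,0)}$ maps $T_x\partial\Omega\oplus\mathbb R$ isomorphically onto $T_x\partial\Omega\oplus\mathbb R\,\nu(x)=\mathbb R^{N+1}$, and by the inverse function theorem $\Phi$ is a local $C^1$ diffeomorphism near each $(x,0)$.

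Next I would promote this local injectivity to injectivity on a uniform slab $\partial\Omega\times(-h_0,h_0)$. Arguing by contradiction, suppose no such $h_0$ exists; then there are distinct pairs $(x_n,t_n)\neq(y_n,s_n)$ with $t_n,s_n\to 0$ and $\Phi(x_n,t_n)=\Phi(y_n,s_n)$. By compactness of $\partial\Omega$ we may pass to convergent subsequences $x_n\to x$ and $y_n\to y$, and continuity gives $\Phi(x,0)=\Phi(y,0)$, that is $x=y$. But then for $n$ large both pairs lie in a single neighborhood of $(x,0)$ on which $\Phi$ is injective, a contradiction. This yields an $h_0>0$ for which $\Phi$ is injective on $\partial\Omega\times(-h_0,h_0)$.

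Finally I would connect injectivity of $\Phi$ to uniqueness of the nearest point. Set $h:=h_0$ (shrinking $h$ if necessary) and fix $y\in\omega_h$. Since $\partial\Omega$ is compact the continuous function $x\mapsto|y-x|$ attains its minimum $d_0(y)<h$ at some $x\in\partial\Omega$; because $\partial\Omega$ is a manifold without boundary, the first-order condition at this minimizer forces $y-x$ to be orthogonal to $T_x\partial\Omega$, hence parallel to $\nu(x)$. As $y$ lies inside $\Omega$, the vector $y-x$ points along the inward normal, so $y=x-d_0(y)\,\nu(x)=\Phi\big(x,d_0(y)\big)$ with $d_0(y)\in(-h_0,h_0)$. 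If $y$ had two nearest points $x_1\neq x_2$, both would satisfy $y=\Phi\big(x_i,d_0(y)\big)$ with the same second coordinate, contradicting the injectivity of $\Phi$; thus the nearest point is unique. I expect the main obstacle to be precisely the passage from the purely local injectivity provided by the inverse function theorem to injectivity on a uniform tubular neighborhood, which is where the compactness of $\partial\Omega$, and hence its global $C^2$ regularity, enters in an essential way.
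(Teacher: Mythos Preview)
Your argument is correct and is precisely the standard proof via the normal map $\Phi(x,t)=x-t\nu(x)$: invertibility of $d\Phi_{(x,0)}$ gives local injectivity, compactness of $\partial\Omega$ upgrades this to injectivity on a uniform slab, and the first-order optimality condition at a nearest point forces $y=\Phi(x,d_0(y))$, so two nearest points would contradict injectivity. The paper does not provide its own proof of this statement at all; it simply refers the reader to \cite{krantz} (and also \cite[Ch.~6, Theorem~6.3]{shapes} and \cite[Lemma~14.16]{gitr}), and what you have written is essentially the argument one finds in those references.
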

We refer to \cite{krantz} for the proof of Theorem \ref{tubular0} (see also \cite[Ch.6, Theorem 6.3]{shapes} and \cite[Lemma 14.16]{gitr}). Throughout the rest of the paper we shall denote by $\bar h$ the maximal possible tubular radius of $\Omega$, namely
\begin{equation}\label{barh}
\bar h:=\sup\left\{h>0:{\rm every\  point\  in\ } \omega_{h} {\rm\ has\ a\ unique\ nearest\ point\ on\ } \partial\Omega\right\}.
\end{equation}
From Theorem \ref{tubular0} it follows that if $\Omega$ is of class $C^2$ such $\bar h$ exists and is positive. For any $h\in]0,\bar h[$ we denote by $\Gamma_h$ the set
\begin{equation}\label{Gamma}
\Gamma_h:=\partial\omega_{{h}}\setminus\partial\Omega.
\end{equation}
Throughout the rest of this section, we will denote by $h$ a positive number such that $h\in]0,\bar h[$. In a tubular neighborhood $\omega_h$ the distance function (and hence its square) is of class $C^2$. This is stated in the following:

\begin{theorem}\label{tubular}
Let $\Omega$ be a bounded domain in $\mathbb R^{N+1}$ of class $C^2$. Let $\omega_{{h}}$ be as in \eqref{tubular_n}. Then $d_0$ is of class $C^2$ in $\omega_h$. Moreover, for any $x\in\partial\Omega$, the matrix $D^2\left(d_0(x)^2/2\right)$ represents the orthogonal projection on the normal space to $\partial\Omega$ at $x$ and
$$
d_0(x-p\nu(x))=|p|,
$$
$$
\nabla d_0(x-p\nu(x))=-\nu(x),
$$
for any $p\in\mathbb R$ with $|p|\leq h$.
\end{theorem}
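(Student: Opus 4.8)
The plan is to prove Theorem~\ref{tubular} by exploiting the fact that on $\omega_h$ the nearest-point projection onto $\partial\Omega$ is well-defined and single-valued (Theorem~\ref{tubular0}), and that this projection inherits the $C^2$ regularity of the boundary. First I would establish the $C^2$ regularity of $d_0$ in $\omega_h$. The standard route is to locally parametrize $\partial\Omega$ by a $C^2$ chart $y\mapsto X(y)$ and consider the normal map $F(y,p):=X(y)-p\,\nu(X(y))$, which sends a point of the boundary together with a signed normal distance to the corresponding interior point. Since $\partial\Omega$ is $C^2$, the Gauss map $\nu\circ X$ is $C^1$, so $F$ is $C^1$; its Jacobian at $p=0$ is invertible (the differential in the $y$-directions spans the tangent space and the $p$-direction gives the normal), so by the inverse function theorem $F$ is a $C^1$ diffeomorphism from a neighborhood of $\partial\Omega\times\{0\}$ onto $\omega_h$. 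Because on $\omega_h$ the nearest point is unique, the inverse of $F$ recovers $(y,p)$ from $x$, and the signed distance equals the second coordinate, so $d_0$ is $C^1$; a bootstrap argument (differentiating the eikonal-type identity satisfied by $d_0$) then upgrades this to $C^2$, which is where I would lean on the cited references \cite{krantz} and \cite[Ch.6--7]{shapes}.

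Next I would prove the two pointwise formulas along the normal ray. Fix $x\in\partial\Omega$ and $|p|\le h$, and set $y:=x-p\,\nu(x)$. The claim $d_0(y)=|p|$ is precisely the statement that the foot of the perpendicular from $y$ to $\partial\Omega$ is $x$ itself, which is exactly the uniqueness guaranteed by Theorem~\ref{tubular0}: the point on $\partial\Omega$ closest to $y$ lies along the normal at distance $|p|$, and by uniqueness it must be $x$. For the gradient formula $\nabla d_0(y)=-\nu(x)$, I would combine Lemma~\ref{normal}, which gives $\nabla d_0(x)=-\nu(x)$ on the boundary, with the fact that $\nabla d_0$ is constant along the normal segment from $x$ to $y$. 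This constancy follows from the eikonal equation $|\nabla d_0|=1$ together with the observation that the integral curves of $\nabla d_0$ are exactly the normal lines: differentiating $d_0$ along the ray $t\mapsto x-t\nu(x)$ and using that this ray is a distance-minimizing geodesic of the ambient Euclidean metric shows $\nabla d_0$ does not rotate as we move inward.

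Finally I would address the Hessian statement: for $x\in\partial\Omega$, the matrix $D^2\!\big(d_0^2/2\big)(x)$ is the orthogonal projection onto the normal space. Writing $\rho:=d_0^2/2$, one has $\nabla\rho=d_0\,\nabla d_0$, so $D^2\rho=\nabla d_0\otimes\nabla d_0+d_0\,D^2 d_0$. Evaluating at a boundary point, where $d_0=0$, kills the second term and leaves $D^2\rho(x)=\nabla d_0(x)\otimes\nabla d_0(x)=\nu(x)\otimes\nu(x)$ by Lemma~\ref{normal}; since $\nu(x)$ is a unit vector, $\nu(x)\otimes\nu(x)$ is exactly the rank-one orthogonal projection onto the normal line, which for a hypersurface is the full normal space. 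The main obstacle is the first step, namely rigorously establishing the $C^2$ regularity and the diffeomorphism property of the normal map, since this requires the careful inverse-function-theorem argument and the nondegeneracy of $I-pL$ (with $L$ the shape operator) on $\omega_h$; I would handle this by invoking the cited results rather than reproving them, and then the formulas on the normal ray and the Hessian identity follow by the direct computations sketched above.
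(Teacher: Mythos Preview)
The paper does not give its own proof of this theorem: immediately after the statement it writes ``We refer to \cite[Theorem 3.1]{ambrosio1}, \cite[Ch.7, Theorem 8.5]{shapes} and \cite[Lemma 14.16]{gitr} for the proof of Theorem \ref{tubular}.'' So there is no in-paper argument to compare against; your sketch is in fact precisely the standard route taken in those references (in particular the normal-map/inverse-function-theorem argument is exactly Gilbarg--Trudinger, Lemma~14.16).

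Two small points worth tightening. First, your ``bootstrap'' for $C^2$ regularity is unnecessary: once you know the projection $\pi:\omega_h\to\partial\Omega$ is $C^1$ (as a component of $F^{-1}$), the identity $\nabla d_0=-\nu\circ\pi$ already exhibits $\nabla d_0$ as a composition of two $C^1$ maps, hence $d_0\in C^2$ directly. Second, when you evaluate $D^2(d_0^2/2)=\nabla d_0\otimes\nabla d_0+d_0\,D^2 d_0$ at $x\in\partial\Omega$, note that the \emph{unsigned} distance $d_0$ is not twice differentiable across $\partial\Omega$; the clean way to justify the computation is either to take the one-sided limit from $\omega_h$ (where $D^2 d_0$ stays bounded, so $d_0\,D^2 d_0\to 0$) or to work with the signed distance, which is genuinely $C^2$ in a two-sided tube and satisfies $d_0^2=\tilde d^{\,2}$. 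With those clarifications your argument is correct and matches the cited sources.
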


We refer to \cite[Theorem 3.1]{ambrosio1}, \cite[Ch.7, Theorem 8.5]{shapes} and \cite[Lemma 14.16]{gitr} for the proof of Theorem \ref{tubular}. The situation described in Theorems \ref{tubular0} and \ref{tubular} is illustrated in Figure \ref{F1}.

\begin{figure}[ht]
\centering
\includegraphics[width=0.5\textwidth]{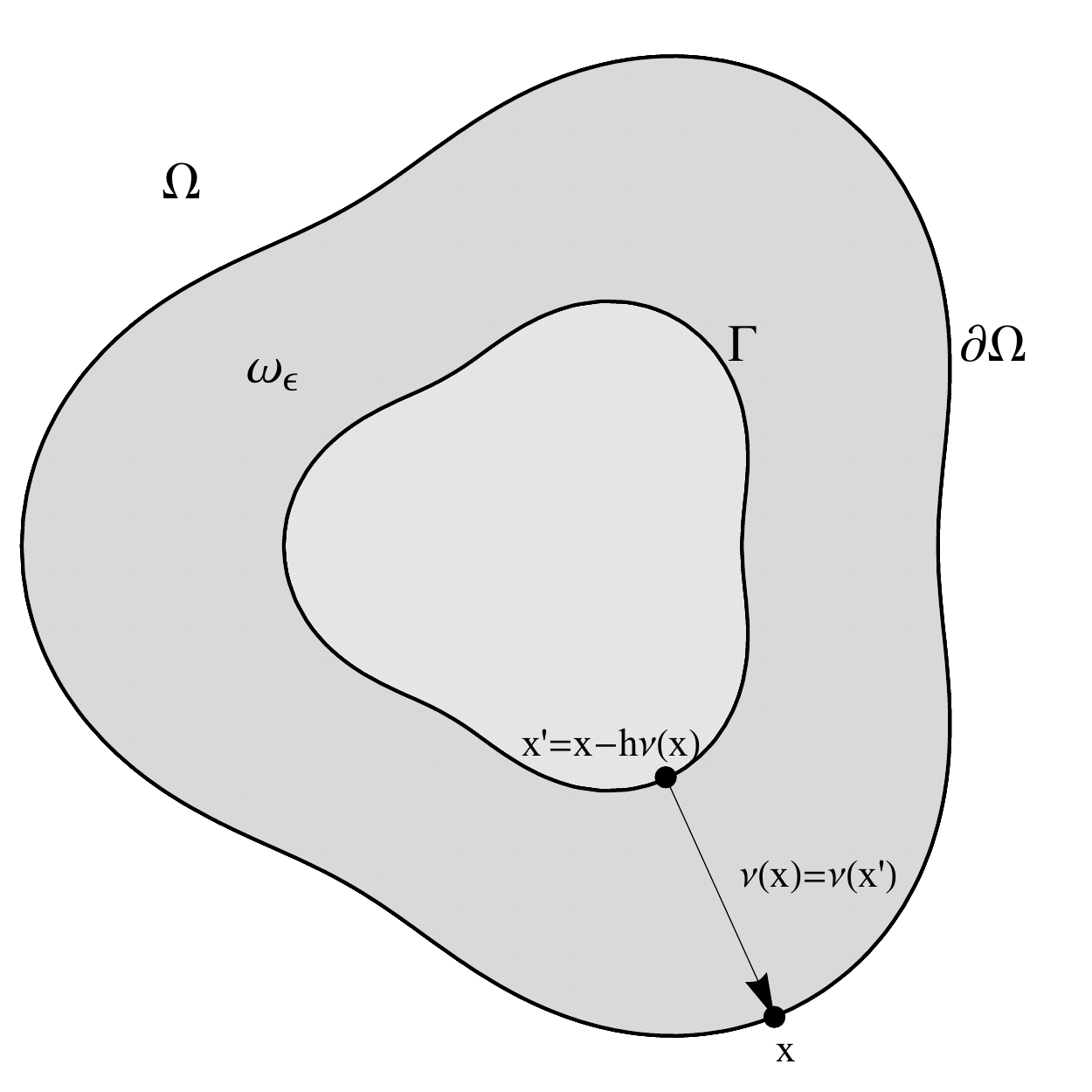}
\caption{Tubular neighborhood of a $C^2$ planar domain.}
\label{F1}
\end{figure}

\begin{rem}
From Theorem \ref{tubular} it follows that the set $\Gamma_h$ is diffeomorphic to $\partial\Omega$.
\end{rem}

Let $x\in\partial\Omega$ and let $\kappa_1(x),...,\kappa_{N}(x)$ denote the principal curvatures of $\partial\Omega$ at $x$ with respect to the outward unit normal. We refer e.g., to \cite[Sec. 14.6]{gitr} for the definition and basic properties of the principal curvatures of $\partial\Omega$. We have the following:
\begin{lemma}\label{bound_curv_lemma}
Let $\Omega$ be a bounded domain in $\mathbb R^{N+1}$ of class $C^2$. Let $x\in\omega_{{h}}$ and let $y\in\partial\Omega$ be the nearest point to $x$ on $\partial\Omega$. Then
\begin{equation}\label{bound_curv}
1-d_0(x)\kappa_i(y)>0
\end{equation}
for all $i=1,...,N$.
\end{lemma}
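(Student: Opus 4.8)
The plan is to reduce the statement to a one–dimensional argument along the inward normal ray through $y$ and then to read off the tangential eigenvalues of the Hessian of $d_0$, which are controlled because $d_0\in C^2(\omega_h)$ by Theorem \ref{tubular}. Fix $y\in\partial\Omega$ and, for $t\in[0,h)$, set $x(t):=y-t\nu(y)$. By Theorem \ref{tubular} we have $d_0(x(t))=t$ and $\nabla d_0(x(t))=-\nu(y)$, and by the uniqueness in Theorem \ref{tubular0} the point $y$ is the nearest point of $\partial\Omega$ to $x(t)$. Since an arbitrary $x\in\omega_h$ with nearest point $y$ is of the form $x=x(d_0(x))$, it suffices to prove $1-t\kappa_i(y)>0$ for all $t\in[0,h)$ and all $i$.

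Next I would compute the Hessian of $d_0$ along the ray. Choosing a principal orthonormal frame $e_1,\dots,e_N$ at $y$, the Gauss map satisfies $d\nu(e_i)=\kappa_i(y)\,e_i$ (principal curvatures taken with respect to the outward normal $\nu$). The normal map $\Phi(y,t):=y-t\nu(y)$ has tangential differential $D_y\Phi[e_i]=e_i-t\,d\nu(e_i)=(1-t\kappa_i(y))\,e_i$. Differentiating the identity $\nabla d_0\circ\Phi(\cdot,t)=-\nu(\cdot)$ — valid on $\omega_h$ by Theorem \ref{tubular} — at $y$ in the direction $e_i$ gives $D^2d_0(x(t))\,(1-t\kappa_i(y))e_i=-\kappa_i(y)e_i$, while differentiating along the ray (where $\nabla d_0$ is constant) gives $D^2d_0(x(t))\,\nu(y)=0$. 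Hence
\begin{equation*}
D^2d_0(x(t))\,e_i=\frac{-\kappa_i(y)}{1-t\kappa_i(y)}\,e_i,\qquad D^2d_0(x(t))\,\nu(y)=0 .
\end{equation*}
(Equivalently one may invoke the classical formula for the Hessian of the distance function, e.g.\ \cite[Lemma 14.17]{gitr}.) A quick check on the ball $B_R$, where $\kappa_i\equiv 1/R$ and $d_0=R-|x|$, so that the tangential eigenvalue is $-1/(R-t)$, confirms both the formula and the sign convention.

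Finally I would run a continuity argument in $t$. Since $d_0\in C^2(\omega_h)$ by Theorem \ref{tubular}, the Hessian $D^2d_0$ is continuous, hence bounded on a neighbourhood of each point $x(t_0)\in\omega_h$. If $1-t\kappa_i(y)$ vanished at some $t_0\in(0,h)$, then necessarily $\kappa_i(y)=1/t_0\neq 0$ and the eigenvalue $-\kappa_i(y)/(1-t\kappa_i(y))$ would be unbounded as $t\to t_0^-$, contradicting the local boundedness of $D^2d_0$ near $x(t_0)$. Therefore $t\mapsto 1-t\kappa_i(y)$ never vanishes on $[0,h)$; as it equals $1$ at $t=0$ and is continuous, it stays strictly positive on $[0,h)$, which is the assertion.

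The main obstacle is the passage from the mere non-degeneracy of the normal map (equivalently, the non-vanishing of the product $\prod_i(1-t\kappa_i)$, which is all that injectivity of $\Phi$ directly yields) to the strict sign of \emph{each} individual factor $1-t\kappa_i$. This is exactly what the $C^2$-regularity of $d_0$ buys us, since it forbids any single tangential eigenvalue of $D^2d_0$ from blowing up before $t$ reaches $\bar h$. Some care must also be taken with the curvature sign convention, so that the focal radii $1/\kappa_i$ (for $\kappa_i>0$) correspond to the genuine obstruction to uniqueness of nearest points.
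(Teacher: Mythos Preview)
Your argument is correct. The paper does not supply its own proof of this lemma; it simply refers the reader to \cite[Lemma~2.2]{lewis}. Your self-contained route---exploiting the $C^2$ regularity of $d_0$ on $\omega_h$ (Theorem~\ref{tubular}) to control the tangential eigenvalues of $D^2d_0$ along the normal ray, and then using continuity in $t$ from the value $1$ at $t=0$---is exactly the standard mechanism behind the cited result (and behind \cite[Lemma~14.17]{gitr}, which you also invoke). One small simplification: once you have the chain-rule identity $D^2d_0(x(t))\,(1-t\kappa_i(y))e_i=-\kappa_i(y)e_i$ valid for \emph{all} $t\in[0,h)$, you need not pass to the limit $t\to t_0^-$; evaluating directly at a hypothetical $t_0$ with $1-t_0\kappa_i(y)=0$ already yields $0=-\kappa_i(y)e_i$ with $\kappa_i(y)=1/t_0\neq 0$, a contradiction. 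Either way, the non-vanishing of each factor together with continuity from $t=0$ gives the strict positivity you want.
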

We refer to \cite[Lemma 2.2]{lewis} for a proof of Lemma \ref{bound_curv_lemma}. We note that the number ${\bar h}$ in \eqref{barh} provides an upper bound for the positive principal curvatures of $\partial\Omega$. In fact we have
\begin{equation}\label{K1}
K_+:=\max_{\substack{1\leq i\leq N,\\x\in\partial\Omega}}\max\left\{0,\kappa_i(x)\right\}<\frac{1}{{\bar h}}.
\end{equation}
We also define $K_-$ by
\begin{equation}\label{K2}
K_-:=\min_{\substack{1\leq i\leq N,\\x\in\partial\Omega}}\min\left\{0,\kappa_i(x)\right\}\leq 0.
\end{equation}
and $K_{\infty}$ by
\begin{equation}\label{KKKK}
K_{\infty}:=\max\left\{K_+,-K_-\right\}=\max_{\substack{1\leq i\leq N,\\x\in\partial\Omega}}|\kappa_i(x)|.
\end{equation}

Now we introduce the functions ${d}$ and ${\eta}$ from $\omega_{{h}}$ to $\mathbb R$ defined by
$$
{d(x)}:={\rm{dist}}(x,\Gamma_h)
$$
and
$$
\eta(x):=\frac{{d(x)}^2}{2}.
$$
Clearly ${d(x)}={h}-d_0(x)$ for all $x\in\omega_{{h}}$, hence $d$ and $\eta$ are of class $C^2$ in $\omega_{{h}}$.

Let $x\in\partial\Omega$ and let ${x'}=x-{h}\nu(x)\in\Gamma_h$. Let now ${\kappa}_1'({x'}),...,{\kappa}_{N}'({x'})$ denote the principal curvatures of $\Gamma_h$ at ${x'}$ with respect to the outward unit normal. The principal curvatures ${\kappa}_i'({x'})$ and $\kappa_i(x)$ are related, as stated in the following:
\begin{lemma}\label{linklemma}
Let $\Omega$ be a bounded domain in $\mathbb R^{N+1}$ of class $C^2$. Let $\omega_{{h}}$ and $\Gamma_h$ be defined by \eqref{tubular_n} and \eqref{Gamma}, respectively. Let $x\in\partial\Omega$ and let ${x'}=x-{h}\nu(x)\in\Gamma_h$. Then we have
\begin{equation}\label{link}
{\kappa}_i'({x'})=\frac{\kappa_i(x)}{1-{h} \kappa_i(x)}
\end{equation}
for all $i=1,...,N$. Moreover, $\nu(x)=\nu({x'})$.
\end{lemma}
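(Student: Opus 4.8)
The plan is to realize $\Gamma_h$ as the image of $\partial\Omega$ under the normal displacement map $\Phi(x)=x-h\,\nu(x)$ and to extract the shape operator of $\Gamma_h$ by differentiating this map alongside the Gauss map. First I would fix $x\in\partial\Omega$ and choose a local $C^2$ parametrization $r\colon U\subset\mathbb{R}^N\to\partial\Omega$ with $r(u_0)=x$ whose coordinate vectors at $u_0$ are the orthonormal principal directions $e_1,\dots,e_N$ of $\partial\Omega$ at $x$. With the convention that the $\kappa_i$ are measured with respect to the outward normal, the Weingarten equations in these coordinates read $\partial_i\nu=\kappa_i(x)\,e_i$ at $u_0$.

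Next I would differentiate $\Phi$. Since $\Phi(u)=r(u)-h\,\nu(r(u))$, at $u_0$ we obtain $\partial_i\Phi=\partial_i r-h\,\partial_i\nu=(1-h\kappa_i(x))\,e_i$. Two facts follow at once. Because $1-h\kappa_i(x)>0$ — this is exactly \eqref{bound_curv} applied at $x'=x-h\,\nu(x)$, whose nearest boundary point is $x$ and which satisfies $d_0(x')=h$; equivalently $h\kappa_i(x)\le hK_+<\bar h K_+<1$ by $h<\bar h$ and \eqref{K1} — the differential of $\Phi$ is invertible, so $\Phi$ is a local diffeomorphism onto $\Gamma_h$ and the tangent space $T_{x'}\Gamma_h$ is spanned by the very same vectors $e_1,\dots,e_N$ as $T_x\partial\Omega$. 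Hence the two hypersurfaces share the same normal line at corresponding points, and with the outward orientations (the outward normal of $\Gamma_h$ being that of the shrunk domain $\{d_0>h\}$, which points toward $\partial\Omega$) this forces $\nu(x')=\nu(x)$. This last identity can alternatively be read off directly from Theorem \ref{tubular}, which gives $\nabla d_0(x-h\,\nu(x))=-\nu(x)$.

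Having identified the Gauss map of $\Gamma_h$ with $\nu$, I would then compute its shape operator. Writing $\nu'=\nu\circ r$ for the unit normal field of $\Gamma_h$ along $\Phi$, we have $\partial_i\nu'=\partial_i\nu=\kappa_i(x)\,e_i$, while $\partial_i\Phi=(1-h\kappa_i(x))\,e_i$. The Weingarten map $S'$ of $\Gamma_h$ is defined by $\partial_i\nu'=S'(\partial_i\Phi)$, so
\begin{equation*}
\kappa_i(x)\,e_i=S'\big((1-h\kappa_i(x))\,e_i\big)=(1-h\kappa_i(x))\,S'(e_i),
\end{equation*}
which shows that $e_i$ is again a principal direction of $\Gamma_h$ and that the associated principal curvature is
\begin{equation*}
\kappa_i'(x')=\frac{\kappa_i(x)}{1-h\kappa_i(x)},
\end{equation*}
as claimed.

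The step requiring the most care is the bookkeeping of orientations: one must verify that the outward normal assigned to $\Gamma_h$ coincides with $\nu(x)$ and not $-\nu(x)$, since the opposite choice would reverse the sign of every $\kappa_i'$ and destroy the formula. A convenient consistency check is the ball of radius $R$, where $\kappa_i=1/R$ and the formula returns $\kappa_i'=1/(R-h)$, precisely the curvature of the concentric inner sphere $\Gamma_h$ of radius $R-h$. I regard the rest as routine once the principal frame is fixed, the displacement map differentiated, and the non-degeneracy $1-h\kappa_i(x)>0$ invoked from the curvature bounds of the preceding lemmas.
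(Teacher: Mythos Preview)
Your argument is correct and more explicit than what the paper does: the paper gives no self-contained proof of this lemma but simply invokes \cite[Theorem~3]{ambrosiobook} together with the relation $d(x)=h-d_0(x)$ (and points to \cite{Sakaguchi2016}). What you have written is exactly the standard parallel-hypersurface computation underlying those references --- realize $\Gamma_h$ as the image of $\partial\Omega$ under $\Phi(x)=x-h\nu(x)$, differentiate in a principal frame to obtain $d\Phi(e_i)=(1-h\kappa_i)e_i$, use $1-h\kappa_i>0$ (your appeal to \eqref{K1} with $h<\bar h$ is the clean way to get this, since \eqref{bound_curv} is stated only for points strictly inside $\omega_h$), and extract the Weingarten map of $\Gamma_h$ via the chain rule for the Gauss map. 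Your orientation discussion, confirmed by the ball example, correctly identifies the ``outward'' normal of $\Gamma_h$ as the one pointing toward $\partial\Omega$, which is the only choice consistent with the use of $\kappa_i'$ in Theorem~\ref{eigenvalues_dist_sq}. In short, there is no gap; you have supplied the details the paper delegates to the literature.
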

The proof of Lemma \ref{linklemma} follows from \cite[Theorem 3]{ambrosiobook} and from the fact that ${d(x)}={h}-d_0(x)$ (see also \cite{Sakaguchi2016}).


Now we are ready to state the following theorem concerning the eigenvalues of $D^2{\eta}$.

\begin{theorem}\label{eigenvalues_dist_sq}
Let $\Omega$ be a bounded domain in $\mathbb R^{N+1}$ of class $C^2$. Let $\omega_{{h}}$ and $\Gamma_h$ be defined by \eqref{tubular_n} and \eqref{Gamma}, respectively. Let $x\in\omega_{{h}}$ and let ${y'}=x+{d(x)}\nabla{d(x)}\in \Gamma_h$ be the nearest point to $x$ on $\Gamma_h$. Then, denoting by $\rho_1(x),...,\rho_N(x)$ the eigenvalues of $D^2{\eta}(x)$ it holds
\begin{equation*}
\rho_i(x)=\begin{cases}
\frac{{d(x)}\kappa_i'({y'})}{1+{d(x)}{\kappa}_i'({y'})}, &  {\rm if\ } 1\leq i\leq N,\\
1, & {\rm if\ } i=N+1.
\end{cases}
\end{equation*}
\end{theorem}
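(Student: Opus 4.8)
The plan is to recognize $\eta$ as one-half the squared distance to the hypersurface $\Gamma_h$ and to differentiate the associated nearest-point projection. Since $\partial\Omega$ is of class $C^2$, the surface $\Gamma_h$ is a $C^2$ hypersurface diffeomorphic to $\partial\Omega$ (cf.\ the remark following Theorem \ref{tubular}), and by the tubular structure of $\omega_h$ every $x\in\omega_h$ has a unique nearest point $y'\in\Gamma_h$. On this set $x\mapsto\tfrac12\,{\rm dist}(x,\Gamma_h)^2=\eta(x)$ is differentiable and satisfies the standard identity $\nabla\eta(x)=x-y'(x)$, where $y'(x)$ denotes the projection of $x$ onto $\Gamma_h$. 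Differentiating once more gives $D^2\eta(x)=I-Dy'(x)$, so the whole statement reduces to computing the eigenvalues of the Jacobian of the nearest-point projection $y'(\cdot)$ onto $\Gamma_h$.

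To compute $Dy'$, I would parametrize $\omega_h$ by the normal exponential map of $\Gamma_h$. Writing $\nu(y')$ for the outward unit normal of $\Gamma_h$ at $y'$ (which by Lemma \ref{linklemma} coincides with $\nu$ on $\partial\Omega$), every $x\in\omega_h$ is uniquely of the form $x=\Phi(y',t):=y'+t\,\nu(y')$ with $t=d(x)\in\,]0,h[$, and $y'(x)$ is exactly the first component of $\Phi^{-1}$. Choosing at $y'$ an orthonormal basis $e_1,\dots,e_N$ of principal directions of $\Gamma_h$, the relation $D_{e_i}\nu=\kappa_i'(y')\,e_i$ (with $\kappa_i'$ the principal curvatures of $\Gamma_h$ with respect to the outward normal, as in Lemma \ref{linklemma}) shows that the tangential differential of $\Phi$ is diagonal with eigenvalues $1+t\,\kappa_i'(y')$, while $\partial_t\Phi=\nu(y')$. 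Hence $\Phi$ is a $C^1$ diffeomorphism onto $\omega_h$, and inverting its differential shows that $Dy'(x)$ has the eigenvalue $0$ in the direction $\nu(y')$ and the eigenvalues $\tfrac{1}{1+d(x)\kappa_i'(y')}$ along the transported principal directions $e_i$.

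Substituting into $D^2\eta=I-Dy'$ then yields the claimed spectrum directly: in the normal direction $\rho_{N+1}=1-0=1$, and in each principal direction
\begin{equation*}
\rho_i=1-\frac{1}{1+d(x)\,\kappa_i'(y')}=\frac{d(x)\,\kappa_i'(y')}{1+d(x)\,\kappa_i'(y')},\qquad i=1,\dots,N,
\end{equation*}
which is the assertion. The positivity of the denominators, needed both for the diffeomorphism property and for the final formula, follows from the tubular bound together with Lemma \ref{bound_curv_lemma}: using Lemma \ref{linklemma} one has the algebraic identity $1+d\,\kappa_i'(y')=(1-d_0\kappa_i(y))/(1-h\,\kappa_i(y))$ with $y$ the foot point on $\partial\Omega$, and both factors are positive since $1-d_0\kappa_i(y)>0$ by \eqref{bound_curv} and $1-h\,\kappa_i(y)>0$ by \eqref{K1}.

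The routine parts are the identity $\nabla\eta=x-y'$ and the matrix inversion. The genuinely delicate step is the computation of the tangential differential of the normal exponential map $\Phi$ in terms of the principal curvatures of $\Gamma_h$, i.e.\ establishing the parallel-surface Jacobian $1+t\,\kappa_i'(y')$ with the correct orientation of $\nu$ and verifying that the principal frame of $\Gamma_h$ at $y'$, transported along the normal, actually diagonalizes $Dy'$. This is precisely where the $C^2$ regularity of $\partial\Omega$, hence of $\Gamma_h$ via Theorem \ref{tubular}, is used, so that $\nu$ is $C^1$ and the shape operator of $\Gamma_h$ is well defined. As a consistency check, at points of $\Gamma_h$ itself ($d=0$) the formula returns $\rho_i=0$ for $i\le N$ and $\rho_{N+1}=1$, in agreement with Theorem \ref{tubular} applied to $\Gamma_h$.
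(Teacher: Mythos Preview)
Your argument is correct and is precisely the standard route the paper defers to: the paper gives no self-contained proof of this theorem but refers to \cite[Lemma~1]{balinsky}, \cite[Lemma~14.17]{gitr}, and the nearest-point-projection approach of \cite{ambrosiobook,ambrosio1}, all of which proceed by parametrizing the tubular neighborhood via the normal exponential map $\Phi(y',t)=y'+t\,\nu(y')$, diagonalizing $D\Phi$ in a principal frame of $\Gamma_h$, and reading off the Hessian of the (squared) distance. Your identity $\nabla\eta=x-y'(x)$, hence $D^2\eta=I-Dy'$, and the inversion of $D\Phi$ reproduce this computation exactly; the positivity check $1+d\,\kappa_i'(y')>0$ via Lemma~\ref{linklemma} and \eqref{bound_curv} is the right justification for the diffeomorphism property. (As a side remark, the formula $y'=x+d(x)\nabla d(x)$ in the statement carries a sign slip---on $\omega_h$ one has $\nabla d=\nu$ pointing toward $\partial\Omega$, so $y'=x-d(x)\nabla d(x)$---and your identity $\nabla\eta=x-y'$ silently uses the correct sign.)
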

The proof of Theorem \ref{eigenvalues_dist_sq} can be carried out in a similar way as in \cite[Lemma 1]{balinsky} (see also \cite[Lemma 14.17]{gitr}). We also refer to \cite[Theorem 4]{ambrosiobook} and \cite[Theorem 3.2]{ambrosio1} for an alternative approach.

From now on we will agree to order the eigenvalues $\rho_i(x)$ of $D^2{\eta}(x)$ increasingly, so that $\rho_1(x)\leq\rho_2(x)\leq\cdots\leq\rho_{N+1}(x)=1$.

We conclude this section by presenting some bounds for the eigenvalues $\rho_i(x)$ when $x\in\omega_{{h}}$. We have the following:
\begin{lemma}
Let $\Omega$, $\omega_{{h}}$ and $\Gamma_h$ be as in Theorem \ref{eigenvalues_dist_sq}. Let $x\in\omega_{{h}}$ and let $\rho_i(x)$ denote the eigenvalues of $D^2{\eta}(x)$ for $i=1,...,N$. Then
\begin{equation}\label{eig_size_eq}
{h}{K_-}\leq\rho_i(x)\leq {h} K_+<1.
\end{equation}
\proof
 Let $x\in\omega_{{h}}$ and let $y$ be the unique nearest point to $x$ on $\partial\Omega$. From \eqref{link} and from the fact that ${d(x)}={h}-d_0(x)$ it follows that
\begin{equation}\label{rho_kappa}
\rho_i(x)=1-\frac{1-{h}\kappa_i(y)}{1-d_0(x)\kappa_i(y)}.
\end{equation}
We observe that the function $\kappa\mapsto 1-\frac{1-{h}\kappa}{1-d\kappa}$ is increasing and convex for all $0\leq d\leq{h}$, provided $\kappa<1/{h}$ (which is always the case, see \eqref{K1} and \eqref{K2}). Moreover the function $d\mapsto 1-\frac{1-{h}\kappa}{1-d\kappa}$ is decreasing and concave if $\kappa\geq 0$ and increasing and concave if $\kappa\leq 0$. Then
$$
\rho_i(x)\leq 1-\frac{1-{h} K_+}{1-d_0(x)K_+}\leq{h} K_+
$$
and
$$
\rho_i(x)\geq 1-\frac{1-{h} K_-}{1-d_0(x)K_-}\geq {h}K_-,
$$
since $K_-\leq 0\leq K_+$.
This concludes the proof.
\endproof
\end{lemma}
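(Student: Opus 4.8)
The plan is to collapse everything into a single explicit one–parameter function of the principal curvature and then extremize it over the two relevant parameters. First I would invoke Theorem~\ref{eigenvalues_dist_sq}, which gives, for $1\le i\le N$,
\[
\rho_i(x)=\frac{d(x)\,\kappa_i'(y')}{1+d(x)\,\kappa_i'(y')},
\]
where $y'\in\Gamma_h$ is the nearest point to $x$ on $\Gamma_h$. Since $x$ lies on the normal segment through its unique nearest point $y\in\partial\Omega$, this $y'$ coincides with $x'=y-h\nu(y)\in\Gamma_h$, so Lemma~\ref{linklemma} applies and yields $\kappa_i'(y')=\kappa_i(y)/(1-h\kappa_i(y))$. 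Substituting this together with the identity $d(x)=h-d_0(x)$ and simplifying the resulting double fraction, I expect the compact expression
\[
\rho_i(x)=1-\frac{1-h\kappa_i(y)}{1-d_0(x)\kappa_i(y)}.
\]
The denominator is positive by Lemma~\ref{bound_curv_lemma}, and $1-h\kappa_i(y)>0$ by \eqref{K1}, so the formula is well defined.

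Next I would regard $\rho_i$ as a function of the two parameters $\kappa=\kappa_i(y)$ and $d=d_0(x)$, constrained by $0\le d\le h$ and $\kappa<1/h$, and analyze its monotonicity. A direct differentiation shows that $\kappa\mapsto 1-\tfrac{1-h\kappa}{1-d\kappa}$ is increasing, since its $\kappa$-derivative equals $(h-d)/(1-d\kappa)^2\ge 0$. The $d$-derivative equals $-(1-h\kappa)\kappa/(1-d\kappa)^2$, whose sign is opposite to that of $\kappa$; hence the expression is decreasing in $d$ when $\kappa\ge 0$ and increasing in $d$ when $\kappa\le 0$. These two elementary sign computations are the entire engine of the proof.

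For the upper bound I would combine $\kappa_i(y)\le K_+$ with monotonicity in $\kappa$ to obtain $\rho_i(x)\le 1-\tfrac{1-hK_+}{1-d_0(x)K_+}$; then, since $K_+\ge 0$, the right-hand side is decreasing in $d_0(x)$ and is therefore maximized at $d_0(x)=0$, which gives the clean bound $hK_+$. Symmetrically, using $\kappa_i(y)\ge K_-$ (with $K_-\le 0$) together with the fact that for nonpositive curvature the expression increases in $d$, the minimum is again attained at $d_0(x)=0$ and produces the lower bound $hK_-$. The strict inequality $hK_+<1$ is immediate from $K_+<1/\bar h$ in \eqref{K1} and $h<\bar h$.

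I expect the only genuinely delicate point to be the bookkeeping of which endpoint of the interval $d\in[0,h]$ is extremal: because the sign of the $d$-monotonicity flips with the sign of $\kappa$, one must take care to pair each curvature extremum ($K_+$ or $K_-$) with the correct endpoint so that the bound tightens in the intended direction. Everything else reduces to verifying the signs of a few elementary derivatives and carrying out the substitutions from Theorem~\ref{eigenvalues_dist_sq} and Lemma~\ref{linklemma}.
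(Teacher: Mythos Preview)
Your proposal is correct and follows essentially the same approach as the paper: derive the explicit formula $\rho_i(x)=1-\frac{1-h\kappa_i(y)}{1-d_0(x)\kappa_i(y)}$ from Theorem~\ref{eigenvalues_dist_sq} and Lemma~\ref{linklemma}, then use monotonicity in $\kappa$ and in $d$ (with the sign of the latter governed by the sign of $\kappa$) to reduce to the endpoint $d_0(x)=0$ and obtain $hK_-\le\rho_i(x)\le hK_+$. The only cosmetic difference is that you write out the two partial derivatives explicitly, while the paper simply asserts the monotonicity (and also the convexity/concavity, which is not actually needed for the bound).
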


\begin{rem}\label{convex}
If $\Omega$ is a convex domain of class $C^2$ we have that $\kappa_i(x)\geq 0$ for all $i=1,...,N$ and for all $x\in\partial\Omega$, hence $0\leq\rho_i(x)\leq 1$, for all $i=1,...,N+1$ and for all $x\in\omega_{{h}}$. Moreover Theorem \ref{eigenvalues_dist_sq} holds for all $h\in]0,1/K_{\infty}[$ (see Section \ref{ex}). This is not true for general non-convex domains, since it is not possible to estimate the size of the maximum tubular neighborhood $\omega_{{h}}$ only in terms of the principal curvatures. In fact ${h}$ can be much smaller than $1/K_{\infty}$ (see Figure \ref{F2}).
\end{rem}

\begin{figure}[ht]
\centering
\includegraphics[width=0.5\textwidth]{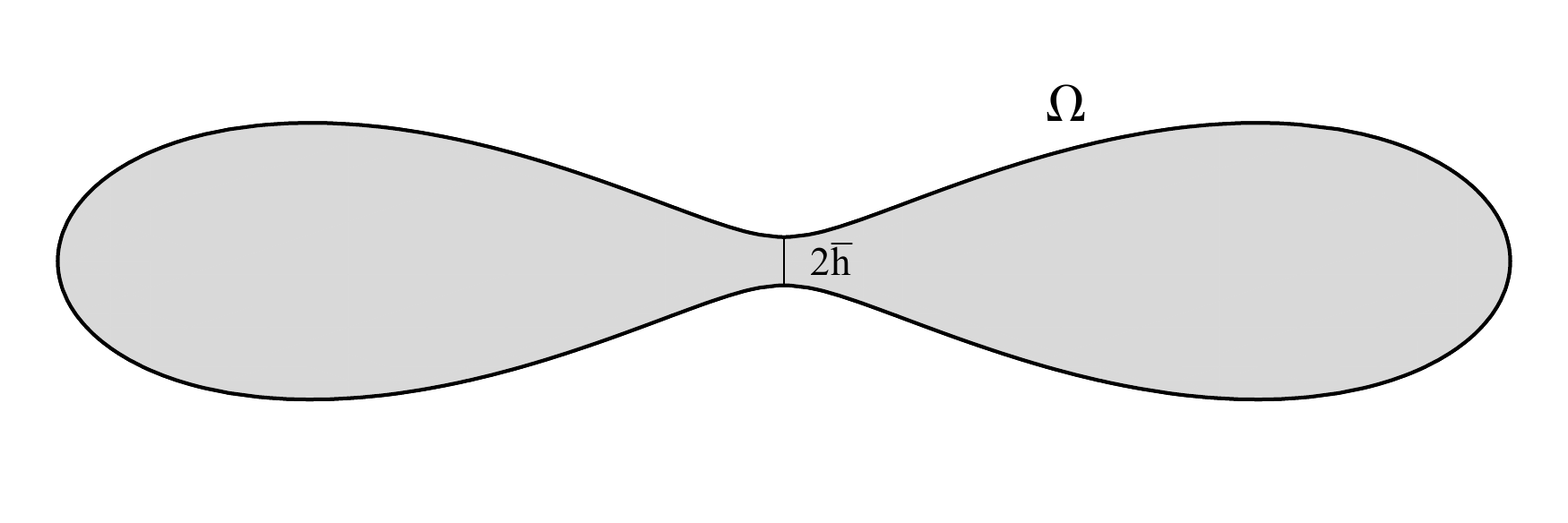}
\caption{If the domain is not convex we can have arbitrary small ${\bar h}$ while $K_{\infty}$ is uniformly bounded.}
\label{F2}
\end{figure}

\section{Boundary integrals of harmonic functions}\label{sec:har}

The aim of this section is to prove that for a function $v\in H^2(\Omega)$ harmonic in $\Omega$, the norms $\|\nabla_{\partial\Omega}v\|_{L^2(\partial\Omega)}$ and $\|\frac{\partial v}{\partial\nu}\|_{L^2(\partial\Omega)}$ are equivalent. Here $\nabla_{\partial\Omega}v$ denotes the tangential gradient of a function $v\in H^1(\partial\Omega)$. This is the usual intrinsic gradient of $v$ on the Riemannian $C^2$-manifold $\partial\Omega$ with the induced Riemannian metric of $\mathbb R^{N+1}$. 
We will denote by $H^m(\Omega)$ (respectively $H^m(\partial\Omega)$) the Sobolev spaces of real-valued functions in $L^2(\Omega)$ (respectively $L^2(\partial\Omega)$) with weak derivatives up to order $m$ in $L^2(\Omega)$ (respectively $L^2(\partial\Omega)$). We will also denote by $d\sigma$ the $N$-dimensional measure element of $\partial\Omega$.

We start with the following generalized Pohozaev identity for harmonic functions:
\begin{lemma}\label{poholemma}
Let $F:\Omega\rightarrow\mathbb R^{N+1}$ be a Lipschitz vector field. Let $v\in H^2(\Omega)$ with $\Delta v=0$ in $\Omega$. Then
\begin{multline}\label{pohozaev}
\int_{\partial\Omega}\frac{\partial v}{\partial\nu}F\cdot\nabla v d\sigma-\frac{1}{2}\int_{\partial\Omega}|\nabla v|^2F\cdot\nu d\sigma\\
+\frac{1}{2}\int_{\Omega}|\nabla v|^2 {\rm div}F dx-\int_{\Omega}(DF\cdot\nabla v)\cdot\nabla v dx=0,
\end{multline}
where $DF$ denotes the Jacobian matrix of $F$.
\proof
Since $v$ is harmonic in $\Omega$, we have $\Delta vF\cdot\nabla v=0$ in $\Omega$. We integrate such identity over $\Omega$. Throughout the rest of the proof we shall write $\partial_i v$ for $\frac{\partial v}{\partial x_i}$ and $\partial^2_{ik} v$ for $\frac{\partial^2 v}{\partial x_i\partial x_k}$. We have
\begin{multline}\label{poho_proof}
0=\int_{\Omega}\Delta v F\cdot\nabla v dx=\int_{\partial\Omega}\frac{\partial v}{\partial\nu}F\cdot\nabla v d\sigma-\int_{\Omega}\nabla v\cdot\nabla(F\cdot\nabla v)dx\\
=\int_{\partial\Omega}\frac{\partial v}{\partial\nu}F\cdot\nabla v d\sigma-\int_{\Omega}(DF\cdot\nabla v)\cdot\nabla v dx-\int_{\Omega}(D^2v\cdot F)\cdot\nabla v dx,
\end{multline}
where $D^2v$ denotes the Hessian matrix of $v$. Now let us consider the third summand in \eqref{poho_proof}. We have
\begin{multline*}
\int_{\Omega}(D^2v\cdot F)\cdot\nabla v dx=\int_{\Omega}\sum_{i,k=1}^{N+1}\partial_iv\partial^2_{ik}v F_k dx\\
=\int_{\partial\Omega}\sum_{i,k=1}^{N+1}\partial_iv\partial_ivF_k\nu_kd\sigma-\int_{\Omega}\sum_{i,k=1}^{N+1}\partial_iv\partial_k(\partial_iv F_k)dx\\
=\int_{\partial\Omega}|\nabla u|^2 F\cdot\nu d\sigma-\int_{\Omega}|\nabla v|^2{\rm div}F dx-\int_{\Omega}(D^2v\cdot F)\cdot\nabla v dx,
\end{multline*}
thus
\begin{equation}\label{piece}
\int_{\Omega}(D^2v\cdot F)\cdot\nabla v dx=\frac{1}{2}\int_{\partial\Omega}|\nabla v|^2 F\cdot\nu d\sigma-\frac{1}{2}\int_{\Omega}|\nabla v|^2{\rm div}F dx.
\end{equation}
We plug \eqref{piece} in \eqref{poho_proof} and finally obtain \eqref{pohozaev}. This concludes the proof of the lemma.
\endproof
\end{lemma}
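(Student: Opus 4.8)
The plan is to begin from the elementary observation that, since $v$ is harmonic, the scalar function $\Delta v\,(F\cdot\nabla v)$ vanishes identically in $\Omega$; integrating it over $\Omega$ and integrating by parts twice should reassemble the four terms of \eqref{pohozaev}. Because $v$ is only assumed to lie in $H^2(\Omega)$ and $F$ is merely Lipschitz, I would first carry out all manipulations assuming $v$ smooth and $F\in C^1$, where every integration by parts is classical, and postpone the regularity issue to a concluding density argument.

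First I would apply Green's formula to the harmonic integrand:
\[
0=\int_\Omega \Delta v\,(F\cdot\nabla v)\,dx=\int_{\partial\Omega}\frac{\partial v}{\partial\nu}\,(F\cdot\nabla v)\,d\sigma-\int_\Omega \nabla v\cdot\nabla(F\cdot\nabla v)\,dx.
\]
Writing $\partial_i(F\cdot\nabla v)=\sum_k(\partial_i F_k)\,\partial_k v+\sum_k F_k\,\partial^2_{ik}v$, taking the scalar product with $\nabla v$ and summing in $i$ splits the volume integrand into $(DF\cdot\nabla v)\cdot\nabla v$ and $(D^2v\cdot F)\cdot\nabla v$. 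This already supplies the first boundary term and the last volume term of \eqref{pohozaev}, leaving only the Hessian contribution $\int_\Omega(D^2v\cdot F)\cdot\nabla v\,dx$ to be transformed.

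The key step is to rewrite this remaining term. I would use the pointwise identity
\[
(D^2v\cdot F)\cdot\nabla v=\sum_{i,k}\partial_i v\,\partial^2_{ik}v\,F_k=\tfrac12\,F\cdot\nabla\bigl(|\nabla v|^2\bigr),
\]
and integrate by parts once more to obtain
\[
\int_\Omega(D^2v\cdot F)\cdot\nabla v\,dx=\frac12\int_{\partial\Omega}|\nabla v|^2\,F\cdot\nu\,d\sigma-\frac12\int_\Omega|\nabla v|^2\,\operatorname{div}F\,dx.
\]
Substituting this expression back into the previous identity produces precisely the second boundary term and the third volume term of \eqref{pohozaev}, completing the algebra.

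The main obstacle is entirely one of regularity: for $v\in H^2(\Omega)$ and $F\in W^{1,\infty}(\Omega)$ one must verify that every integrand above is admissible and that the two integrations by parts remain valid. The volume terms cause no trouble, since $|\nabla v|^2\in L^1(\Omega)$ (as $\nabla v\in L^2$) and $(D^2v\cdot F)\cdot\nabla v\in L^1(\Omega)$ (as $D^2v,\nabla v\in L^2$ and $F\in L^\infty$). The delicate terms are those on $\partial\Omega$, which require the traces of $\nabla v$ and of $|\nabla v|^2$; here I would use that $\nabla v\in H^1(\Omega)^{N+1}$, so its trace is well defined in $L^2(\partial\Omega)$ since $\partial\Omega$ is $C^2$. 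I would then close the argument by choosing smooth functions $v_n\to v$ in $H^2(\Omega)$, for which the identity holds classically, and passing to the limit term by term: convergence of the volume integrals follows by dominated convergence, while convergence of the boundary integrals follows from continuity of the trace operator $H^1(\Omega)\to L^2(\partial\Omega)$, which forces $|\nabla v_n|^2\to|\nabla v|^2$ in $L^1(\partial\Omega)$.
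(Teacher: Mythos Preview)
Your proof is correct and follows essentially the same route as the paper: start from $\int_\Omega \Delta v\,(F\cdot\nabla v)\,dx=0$, apply Green's formula, split $\nabla v\cdot\nabla(F\cdot\nabla v)$ into the $DF$ and $D^2v$ pieces, and handle the Hessian term by a second integration by parts. Your use of the pointwise identity $(D^2v\cdot F)\cdot\nabla v=\tfrac12\,F\cdot\nabla|\nabla v|^2$ is slightly more direct than the paper's version (which integrates by parts and solves the resulting self-referential equation for the same integral), and your added density argument for the $H^2$/Lipschitz regularity is a welcome refinement that the paper leaves implicit.
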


\begin{rem}
When $F=x$, formula \eqref{pohozaev} is usually referred as Pohozaev identity. It reads
\begin{equation}\label{poho0}
\int_{\partial\Omega}\frac{\partial v}{\partial\nu}x\cdot\nabla v d\sigma-\frac{1}{2}\int_{\partial\Omega}|\nabla v|^2x\cdot\nu d\sigma+\frac{N-1}{2}\int_{\Omega}|\nabla v|^2 dx=0,
\end{equation}
for all $v\in H^2(\Omega)$ with $\Delta v=0$. Formula \eqref{poho0} when $\Omega$ is a ball in $\mathbb R^{N+1}$ allows to write the exact relations between the Steklov eigenvalues of $\Omega$ and the Laplace-Beltrami eigenvalues on $\partial\Omega$ without knowing explicitly the eigenvalues (see Subsection \ref{balls}). For a general domain $\Omega$ of class $C^2$ it is natural to use $F$ as in \eqref{field} here below.
\end{rem}

Let $\Omega$ be a bounded domain of class $C^2$ in $\mathbb R^{N+1}$. Let ${h}\in]0,\bar h[$, where $\bar h$ is given by \eqref{barh}, and $\omega_{{h}}$ be as in \eqref{tubular_n}. Let $F:\Omega\rightarrow\mathbb R^{N+1}$ be defined by
\begin{equation}\label{field}
F(x):=\begin{cases}
0,& {\rm if\ }x\in\Omega\setminus\omega_{{h}},\\
\nabla{\eta}, & {\rm if\ }x\in\omega_{{h}}.
\end{cases}
\end{equation}
By construction $F$ is a Lipschitz vector field. We consider formula \eqref{pohozaev} with $F$ given by \eqref{field}. We use the fact that for $v\in H^1(\Omega)$ (and hence for $v\in H^2(\Omega)$), $|\nabla v|^2_{|_{\partial\Omega}}=|\nabla_{\partial\Omega}v|^2+\left(\frac{\partial v}{\partial\nu}\right)^2$. Moreover, we use the fact that $F(x)={h}\nu(x)$ when $x\in\partial\Omega$. We have
\begin{multline}\label{main}
0=\int_{\partial\Omega}\left(\frac{\partial v}{\partial\nu}\right)^2d\sigma-\int_{\partial\Omega}|\nabla_{\partial\Omega}v|^2d\sigma\\
+\frac{1}{{h}}\left(\int_{\omega_{{h}}}|\nabla v|^2\Delta{\eta}-2(D^2{\eta}\cdot\nabla v)\cdot\nabla v dx\right).
\end{multline}
Let $x\in\omega_{{h}}$. From \eqref{main}, in order to compare the integrals of $|\nabla_{\partial\Omega}v|^2$ and $\left(\frac{\partial v}{\partial\nu}\right)^2$ over $\partial\Omega$, we have to estimate
\begin{equation}\label{integral}
|\nabla v(x)|^2\Delta{\eta}(x)-2(D^2{\eta}(x)\cdot\nabla v(x))\cdot\nabla v(x).
\end{equation}
We have the following lemma.
\begin{lemma}\label{estimate_big}
Let $\Omega$ be a bounded domain in $\mathbb R^{N+1}$ of class $C^2$. Let $\omega_{{h}}$ be as in \eqref{tubular_n}. For any $v\in H^1(\Omega)$ it holds
\begin{equation}\label{estimate}
\left|\int_{\omega_{{h}}}|\nabla v|^2\Delta{\eta}-2(D^2{\eta}\cdot\nabla v)\cdot\nabla v dx\right|\leq \left(1+N{\bar{H}_{\infty}}{h}\right)\int_{\Omega}|\nabla v|^2 dx,
\end{equation}
where 
\begin{equation*}
{\bar{H}_{\infty}}:=\max_{x\in\partial\Omega}\left(\frac{1}{N}\sum_{i=1}^N|\kappa_i(x)|\right).
\end{equation*}
\proof
Let $x\in\omega_{{h}}$. Let $\xi_i(x)$, $i=1,...,N+1$ be the eigenvectors of $D^2{\eta}(x)$ associated with the eigenvalues $\rho_i(x)$ and normalized such that $\xi_i(x)\cdot\xi_j(x)=\delta_{ij}$. We can write then 
$$
\nabla v(x)=\sum_{i=1}^{N+1}\alpha_i(x)\xi_i(x),
$$
for some $\alpha_i(x)\in\mathbb R$. We note that $|\nabla v(x)|^2=\sum_{i=1}^{N+1}\alpha_i(x)^2$. With this notation \eqref{integral} can be re-written as follows:
\begin{multline}\label{integral2}
Q(\nabla v(x)):=|\nabla v(x)|^2\Delta{\eta}(x)-2(D^2{\eta}(x)\cdot\nabla v(x))\cdot\nabla v(x)\\=\sum_{i=1}^{N+1}\alpha_i(x)^2\sum_{i=1}^N\rho_i(x)-2\sum_{i=1}^{N+1}\rho_i(x)\alpha_i(x)^2.
\end{multline}
Suppose that $\nabla v\ne 0$, otherwise inequality \eqref{estimate} is trivially true. We have that
\begin{equation}\label{QQ}
Q(\nabla v(x))=\sum_{i=1}^{N+1}\rho_i(x)(1-2\tilde\alpha_i(x)^2)|\nabla v(x)|^2,
\end{equation}
where
$$
\tilde\alpha_i(x):=\frac{\alpha_i(x)}{\sqrt{\sum_{i=1}^{N+1}\alpha_i(x)^2}}=\frac{\alpha_i(x)}{|\nabla v(x)|}.
$$
It is straightforward to see that
\begin{multline}\label{maxemin}
\left(\sum_{i=1}^{N}\rho_i(x)-1\right)|\nabla v(x)|^2\leq\sum_{i=1}^{N+1}\rho_i(x)(1-2\tilde\alpha_i(x)^2)|\nabla v(x)|^2\\
\leq \left(1+\sum_{i=2}^{N}\rho_i(x)-\rho_1(x)\right)|\nabla v(x)|^2.
\end{multline}
Now from \eqref{bound_curv},\eqref{eig_size_eq} and \eqref{rho_kappa} it follows that
\begin{equation*}
|\rho_i(x)|-h|\kappa_i(x)|=-\frac{d_0(x)|\kappa_i(y)|(1-\kappa_i(y))}{1-d_0(x)\kappa_i(y)}\leq 0
\end{equation*}
for all $x\in\omega_h$ and $i=1,...,N$, where $y\in\partial\Omega$ is the unique nearest point to $x$ on $\partial\Omega$. Hence for all $x\in\omega_h$
\begin{equation}\label{max}
\sum_{i=2}^{N}\rho_i(x)-\rho_1(x)\leq\sum_{i=1}^N|\rho_i(x)|\leq N\bar{H}_{\infty}h.
\end{equation}
On the other hand, again from \eqref{bound_curv},\eqref{eig_size_eq} and \eqref{rho_kappa} we have that
\begin{equation*}
\rho_i(x)-(h-d_0(x))\kappa_i(y)=\frac{(h-d_0(x))d_0(x)\kappa_i(y)^2}{1-d_0(x)\kappa_i(y)}\geq 0,
\end{equation*}
for all $x\in\omega_h$ and $i=1,...,N$, where $y\in\partial\Omega$ is the unique nearest point to $x$ on $\partial\Omega$. Hence for all $x\in\omega_h$
\begin{equation}\label{min}
\sum_{i=1}^N\rho_i(x)\geq -NH_{\infty}h\geq-N\bar{H}_{\infty}h,
\end{equation}
where
\begin{equation}\label{HHH}
{{H}_{\infty}}:=\max_{x\in\partial\Omega}\frac{1}{N}\left|\sum_{i=1}^N\kappa_i(x)\right|
\end{equation}
denotes the maximal mean curvature of $\partial\Omega$. Clearly \eqref{integral2}, \eqref{QQ}, \eqref{maxemin},\eqref{max} and \eqref{min} imply
$$
|Q(\nabla v(x))|\leq (1+N\bar{H}_{\infty}{h})|\nabla v(x)|^2
$$
and therefore the validity of \eqref{estimate}. This concludes the proof.

\endproof
\end{lemma}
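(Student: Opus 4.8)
The plan is to reduce the claimed integral inequality to a purely pointwise estimate of the integrand. Write
$Q(\nabla v(x)):=|\nabla v(x)|^2\Delta\eta(x)-2(D^2\eta(x)\cdot\nabla v(x))\cdot\nabla v(x)$.
Since $\eta\in C^2(\omega_h)$ its Hessian has continuous (hence bounded) entries, so $Q(\nabla v)\in L^1$ whenever $v\in H^1$, and neither harmonicity nor higher regularity of $v$ is needed. Because $\omega_h\subset\Omega$, once I establish the pointwise bound $|Q(\nabla v(x))|\le(1+N\bar{H}_{\infty}h)|\nabla v(x)|^2$ for every $x\in\omega_h$, integrating over $\omega_h$ and enlarging the domain of integration to $\Omega$ immediately yields \eqref{estimate}. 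Thus everything reduces to controlling the quadratic form $Q$ associated with the single matrix $D^2\eta(x)$.

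First I would diagonalize $D^2\eta(x)$. By Theorem \ref{eigenvalues_dist_sq} its eigenvalues are $\rho_1(x)\le\cdots\le\rho_N(x)\le\rho_{N+1}(x)=1$ with an orthonormal eigenbasis $\{\xi_i(x)\}$; expanding $\nabla v(x)=\sum_i\alpha_i\xi_i$ gives $\Delta\eta=\sum_i\rho_i$ and $(D^2\eta\cdot\nabla v)\cdot\nabla v=\sum_i\rho_i\alpha_i^2$. Factoring out $|\nabla v|^2$ and setting $\tilde\alpha_i=\alpha_i/|\nabla v|$, so that $\sum_i\tilde\alpha_i^2=1$, I obtain $Q=|\nabla v|^2\sum_i\rho_i(1-2\tilde\alpha_i^2)$. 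The problem then becomes to extremize the linear functional $t\mapsto\sum_i\rho_i(1-2t_i)$ over the simplex $\{t_i\ge0,\ \sum_i t_i=1\}$ (this is the exact range of $(\tilde\alpha_1^2,\dots,\tilde\alpha_{N+1}^2)$). As $\sum_i\rho_i$ is fixed, this amounts to minimizing and maximizing $\sum_i\rho_i t_i$, whose extrema on the simplex are attained by concentrating all the weight on the smallest, respectively largest, eigenvalue. This produces the two-sided bound $\sum_{i=1}^N\rho_i-1\le Q/|\nabla v|^2\le 1+\sum_{i=2}^N\rho_i-\rho_1$, where the distinguished value $\rho_{N+1}=1$ accounts for the $\pm1$.

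It then remains to estimate the tangential eigenvalues by curvatures. From the explicit expression \eqref{rho_kappa}, namely $\rho_i(x)=1-\tfrac{1-h\kappa_i(y)}{1-d_0(x)\kappa_i(y)}$ with $y$ the nearest boundary point, together with the positivity $1-d_0(x)\kappa_i(y)>0$ of Lemma \ref{bound_curv_lemma} and $0\le d_0(x)\le h$, I would verify the per-index bound $|\rho_i(x)|\le h|\kappa_i(y)|$ for $i=1,\dots,N$. Summing and using the definition of $\bar{H}_{\infty}$ gives $\sum_{i=1}^N|\rho_i(x)|\le h\sum_{i=1}^N|\kappa_i(y)|\le N\bar{H}_{\infty}h$. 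Hence $\sum_{i=2}^N\rho_i-\rho_1\le\sum_{i=1}^N|\rho_i|\le N\bar{H}_{\infty}h$ and $\sum_{i=1}^N\rho_i\ge-N\bar{H}_{\infty}h$, so both sides of the two-sided bound lie in $[-(1+N\bar{H}_{\infty}h),\,1+N\bar{H}_{\infty}h]$, giving $|Q|\le(1+N\bar{H}_{\infty}h)|\nabla v|^2$ and therefore \eqref{estimate}.

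The step I expect to be the genuine obstacle is the extremization, not the algebra. Because $Q$ combines $+\Delta\eta$ with $-2D^2\eta$ it is an \emph{indefinite} form, so it cannot be bounded by the operator norm of a single matrix: the crude triangle inequality $|Q|\le(|\Delta\eta|+2\|D^2\eta\|)|\nabla v|^2$ would let the unit eigenvalue $\rho_{N+1}=1$ enter with a factor $3$ and ruin the sharp constant. The key observation is that in the normal direction the contribution $\rho_{N+1}(1-2\tilde\alpha_{N+1}^2)$ ranges only over $[-1,1]$, so the distinguished eigenvalue supplies exactly the leading term $1$ while the genuinely small tangential eigenvalues supply the curvature term $N\bar{H}_{\infty}h$. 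Capturing this cancellation through the simplex optimization, rather than a norm bound, is what makes the estimate tight.
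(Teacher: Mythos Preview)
Your proof is correct and follows essentially the same route as the paper: diagonalize $D^2\eta$, write $Q=|\nabla v|^2\sum_i\rho_i(1-2\tilde\alpha_i^2)$, extremize over the $\tilde\alpha_i^2$ to obtain the two-sided bound with $\rho_{N+1}=1$ contributing the leading $\pm 1$, and then control the tangential eigenvalues by $|\rho_i|\le h|\kappa_i(y)|$ via the explicit formula \eqref{rho_kappa}. The only substantive difference is in the lower bound for $\sum_{i=1}^N\rho_i$: you use the triangle inequality $\sum\rho_i\ge-\sum|\rho_i|\ge -N\bar H_\infty h$, whereas the paper computes separately that $\rho_i\ge(h-d_0)\kappa_i(y)$, which yields the slightly sharper $\sum_{i=1}^N\rho_i\ge -NH_\infty h$ involving the \emph{mean} curvature $H_\infty$ rather than $\bar H_\infty$. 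For the lemma as stated this refinement is irrelevant, but the paper exploits it later (see \eqref{bound_mean_curv} and Corollary \ref{steklov_upper_bounds_thm}) to replace $\bar H_\infty$ by $H_\infty$ in one direction of the main comparison.
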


From now on we will write 
\begin{equation*}
c_{\Omega}:=\frac{1}{2\bar h}+\frac{N\bar{H}_{\infty}}{2},
\end{equation*}
where $\bar h$ is given by \eqref{barh}. We are ready to prove the following theorem:
\begin{theorem}\label{equivalence}
Let $v\in H^2(\Omega)$ be such that $\Delta v=0$ in $\Omega$ and normalized such that $\int_{\partial\Omega}v^2 d\sigma=1$. Then it holds
\begin{enumerate}[i)]
\item 
\begin{equation}\label{estimate_tangential}
\int_{\partial\Omega}|\nabla_{\partial\Omega}v|^2d\sigma\leq \int_{\partial\Omega}\left(\frac{\partial v}{\partial\nu}\right)^2d\sigma+{2c_{\Omega}}\left(\int_{\partial\Omega}\left(\frac{\partial v}{\partial\nu}\right)^2d\sigma\right)^{\frac{1}{2}};
\end{equation}
\item
\begin{equation}\label{estimate_normal}
\left(\int_{\partial\Omega}\left(\frac{\partial v}{\partial\nu}\right)^2d\sigma\right)^{\frac{1}{2}}\leq{c_{\Omega}}+\sqrt{{c_{\Omega}^2}+\int_{\partial\Omega}|\nabla_{\partial\Omega}v|^2d\sigma}.
\end{equation}
\end{enumerate}
\proof
Let $h\in]0,\bar h[$. We start by proving $i)$. From Lemmas \ref{poholemma} and \ref{estimate_big} we have
\begin{multline}\label{chain1}
\int_{\partial\Omega}|\nabla_{\partial\Omega}v|^2d\sigma= \int_{\partial\Omega}\left(\frac{\partial v}{\partial\nu}\right)^2d\sigma+\frac{1}{{h}}\left(\int_{\omega_{{h}}}|\nabla v|^2\Delta{\eta}-2(D^2{\eta}\cdot\nabla v)\cdot\nabla v dx\right)\\
\leq \int_{\partial\Omega}\left(\frac{\partial v}{\partial\nu}\right)^2d\sigma+\left(\frac{1}{h}+N\bar{H}_{\infty}\right)\int_{\Omega}|\nabla v|^2dx\\
=\int_{\partial\Omega}\left(\frac{\partial v}{\partial\nu}\right)^2d\sigma+\left(\frac{1}{h}+N\bar{H}_{\infty}\right)\int_{\partial\Omega}v\frac{\partial v}{\partial\nu}dx\\
\leq \int_{\partial\Omega}\left(\frac{\partial v}{\partial\nu}\right)^2d\sigma+\left(\frac{1}{h}+N\bar{H}_{\infty}\right)\left(\int_{\partial\Omega}v^2d\sigma\right)^{\frac{1}{2}}\left(\int_{\partial\Omega}\left(\frac{\partial v}{\partial\nu}\right)^2d\sigma\right)^{\frac{1}{2}}\\
=\int_{\partial\Omega}\left(\frac{\partial v}{\partial\nu}\right)^2d\sigma+\left(\frac{1}{h}+N\bar{H}_{\infty}\right)\left(\int_{\partial\Omega}\left(\frac{\partial v}{\partial\nu}\right)^2d\sigma\right)^{\frac{1}{2}},
\end{multline}
where we have used the following Green's identity
$$
\int_{\Omega}\Delta v v dx=\int_{\partial\Omega}\frac{\partial v}{\partial\nu} v d\sigma-\int_{\Omega}|\nabla v|^2 dx,
$$
the fact that $\Delta v=0$ in $\Omega$, H\"older's inequality and the fact that $\int_{\partial\Omega}v^2d\sigma=1$. Since \eqref{chain1} holds true for all $h\in]0,\bar h[$, it is true with $h=\bar h$. This proves $i)$. We repeat a similar argument for $ii)$. We have
\begin{multline}\label{lower0}
\int_{\partial\Omega}\left(\frac{\partial v}{\partial\nu}\right)^2d\sigma=\int_{\partial\Omega}|\nabla_{\partial\Omega}v|^2d\sigma-\frac{1}{{h}}\left(\int_{\omega_{{h}}}|\nabla v|^2\Delta{\eta}-2(D^2{\eta}\cdot\nabla v)\cdot\nabla v dx\right)\\
\leq \int_{\partial\Omega}|\nabla_{\partial\Omega}v|^2d\sigma+\left(\frac{1}{h}+N\bar{H}_{\infty}\right)\left(\int_{\partial\Omega}\left(\frac{\partial v}{\partial\nu}\right)^2d\sigma\right)^{\frac{1}{2}},
\end{multline}
which is equivalent to
$$
\int_{\partial\Omega}\left(\frac{\partial v}{\partial\nu}\right)^2d\sigma-\left(\frac{1}{h}+N\bar{H}_{\infty}\right)\left(\int_{\partial\Omega}\left(\frac{\partial v}{\partial\nu}\right)^2d\sigma\right)^{\frac{1}{2}}-\int_{\partial\Omega}|\nabla_{\partial\Omega}v|^2d\sigma\leq 0.
$$
This is an inequality of degree two in the unknown $\left(\int_{\partial\Omega}\left(\frac{\partial v}{\partial\nu}\right)^2d\sigma\right)^{\frac{1}{2}}\geq 0$. Solving the inequality we obtain
\begin{equation}\label{chain2}
\left(\int_{\partial\Omega}\left(\frac{\partial v}{\partial\nu}\right)^2d\sigma\right)^{\frac{1}{2}}\leq\left(\frac{1}{2h}+\frac{N\bar{H}_{\infty}}{2}\right)+\sqrt{\left(\frac{1}{2h}+\frac{N\bar{H}_{\infty}}{2}\right)^2+\int_{\partial\Omega}|\nabla_{\partial\Omega}v|^2d\sigma}.
\end{equation}
Since \eqref{chain2} holds true for all $h\in]0,\bar h[$, it is true with $h=\bar h$. This concludes the proof of $ii)$ and of the theorem.
\endproof
\end{theorem}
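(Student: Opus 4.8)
The plan is to derive both inequalities directly from the Pohozaev identity \eqref{main}, which already specializes the general identity of Lemma \ref{poholemma} to the field $F$ of \eqref{field}, by combining it with the bulk estimate of Lemma \ref{estimate_big}, a Green identity, and a final optimization over the tubular radius $h\in\,]0,\bar h[$. The two parts use the same three ingredients; they differ only in which boundary term one isolates and in how the resulting scalar inequality is solved. Throughout I abbreviate $X:=\left(\int_{\partial\Omega}\left(\frac{\partial v}{\partial\nu}\right)^2d\sigma\right)^{1/2}$ and $L:=\int_{\partial\Omega}|\nabla_{\partial\Omega}v|^2d\sigma$, and I write $C_h:=\frac{1}{h}+N\bar H_\infty$, so that $\frac{1}{2}C_h=\frac{1}{2h}+\frac{N\bar H_\infty}{2}\to c_\Omega$ as $h\to\bar h$.

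For part $i)$ I would first rearrange \eqref{main} to read $L=X^2+\frac{1}{h}\big(\int_{\omega_h}|\nabla v|^2\Delta\eta-2(D^2\eta\cdot\nabla v)\cdot\nabla v\,dx\big)$, and then bound the bracketed bulk term from above using the absolute-value estimate \eqref{estimate} of Lemma \ref{estimate_big}, giving $L\le X^2+C_h\int_\Omega|\nabla v|^2\,dx$. The Dirichlet integral over $\Omega$ is then converted to a boundary integral by Green's identity together with harmonicity: since $\Delta v=0$, one has $\int_\Omega|\nabla v|^2\,dx=\int_{\partial\Omega}v\,\frac{\partial v}{\partial\nu}\,d\sigma$. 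Applying the Cauchy--Schwarz (Hölder) inequality and the normalization $\int_{\partial\Omega}v^2\,d\sigma=1$ bounds this by $X$, whence $L\le X^2+C_h\,X$. This holds for every $h\in\,]0,\bar h[$; since $C_h$ decreases in $h$, the sharpest bound is obtained by letting $h\to\bar h$, where $C_h\to 2c_\Omega$, yielding exactly \eqref{estimate_tangential}.

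For part $ii)$ I would start from the same identity but isolate the normal term: $X^2=L-\frac{1}{h}\big(\int_{\omega_h}|\nabla v|^2\Delta\eta-2(D^2\eta\cdot\nabla v)\cdot\nabla v\,dx\big)$. Because \eqref{estimate} controls the bulk term in absolute value, the sign now works in our favour, and the same Green-identity and Cauchy--Schwarz steps as above give $X^2\le L+C_h\,X$, i.e. the quadratic inequality $X^2-C_h\,X-L\le 0$ in the nonnegative unknown $X$. Solving it forces $X$ below the larger root, $X\le \tfrac12 C_h+\sqrt{(\tfrac12 C_h)^2+L}$. Since $t\mapsto t+\sqrt{t^2+L}$ is increasing and $\tfrac12 C_h$ decreases in $h$, optimizing by letting $h\to\bar h$ replaces $\tfrac12 C_h$ by $c_\Omega$ and produces precisely \eqref{estimate_normal}.

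The analytic heart of the argument has already been done in Lemmas \ref{poholemma} and \ref{estimate_big}, so no genuine difficulty remains; the one point demanding care is the optimization in $h$. Lemma \ref{estimate_big} is valid only for $h$ strictly less than the maximal tubular radius $\bar h$, so the scalar inequalities $L\le X^2+C_hX$ and $X^2-C_hX-L\le 0$ hold on the open interval $]0,\bar h[$ rather than at the endpoint. I would therefore phrase the final step as a limiting/monotonicity argument: the coefficients $C_h$ and $\tfrac12 C_h$ are continuous and strictly decreasing in $h$, so their infima over $]0,\bar h[$ are the limits $2c_\Omega$ and $c_\Omega$ as $h\to\bar h^-$, and passing to this limit in each inequality (the left-hand sides being independent of $h$) yields the stated bounds with the constant $c_\Omega=\frac{1}{2\bar h}+\frac{N\bar H_\infty}{2}$.
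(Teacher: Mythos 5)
Your proposal is correct and takes essentially the same route as the paper's proof: rearranging the Pohozaev identity \eqref{main}, bounding the bulk term via Lemma \ref{estimate_big}, converting the Dirichlet integral to a boundary term by Green's identity and harmonicity, applying Cauchy--Schwarz with the normalization, and solving the quadratic inequality in part $ii)$ before letting $h\to\bar h$. Your explicit monotonicity/limiting justification for the endpoint $h=\bar h$ is in fact a slightly more careful phrasing of the paper's terse claim that the inequality ``is true with $h=\bar h$'', but the substance is identical.
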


Theorem \ref{equivalence} states that for harmonic functions $v$ in $\Omega$ the $L^2(\partial\Omega)$ norms of $\frac{\partial v}{\partial\nu}$ and of $\nabla_{\partial\Omega}v$ are equivalent. This will be used in the next section to compare the Steklov eigenvalues on $\Omega$ with the Laplace-Beltrami eigenvalues on $\partial\Omega$.

\begin{rem}
We note that thanks to \eqref{maxemin} and \eqref{min} we can use the maximal mean curvature $H_{\infty}$ instead of $\bar{H}_{\infty}$ in \eqref{lower0} and therefore in the inequality \eqref{chain2}. Moreover, this and inequality \eqref{chain2} imply
\begin{equation}\label{bound_mean_curv}
\int_{\Omega}|\nabla v|^2dx\leq \left(\frac{1}{2h}+\frac{N{H}_{\infty}}{2}\right)+\sqrt{\left(\frac{1}{2h}+\frac{N{H}_{\infty}}{2}\right)^2+\int_{\partial\Omega}|\nabla_{\partial\Omega}v|^2d\sigma}.
\end{equation}
for all $v\in H^2(\Omega)$ with $\Delta v=0$ and $\int_{\partial\Omega}v^2d\sigma=1$.
\end{rem}

\section{Proof of Theorem \ref{main-theorem}}\label{sec:comp}

In this section we prove Theorem \ref{main-theorem}. Namely, we prove that the absolute value of the difference between the $j$-th eigenvalues of problems \eqref{Steklov} and \eqref{Laplace-Beltrami} is bounded by $2c_{\Omega}$. Throughout the rest of the paper we shall assume that $\Omega$ is a bounded domain of class $C^2$ in $\mathbb R^{N+1}$ such that its boundary $\partial\Omega$ has only one connected component. This says that $\partial\Omega$ is a compact $C^2$-submanifold of dimension $N$ in $\mathbb R^{N+1}$ without boundary. In particular, $\partial\Omega$ is a Riemannian $C^2$-manifold of dimension $N$ with the induced Riemannian metric.

The proof of Theorem \ref{main-theorem} is carried out by exploiting Theorem \ref{equivalence} and the following variational characterizations of the eigenvalues of problems \eqref{Steklov} and \eqref{Laplace-Beltrami}, namely

\begin{equation}\label{steklov_minmax}
\sigma_j=\inf_{\substack{V\leq \tilde H^1(\Omega),\\{\rm dim}V=j}}\sup_{\substack{0\ne v\in V,\\\int_{\partial\Omega}v^2d\sigma=1}}\int_{\Omega}|\nabla v|^2dx,
\end{equation}
for all $j\in\mathbb N$, $j\geq 1$, where
$$
\tilde H^1(\Omega):=\left\{v\in H^1(\Omega):\int_{\partial\Omega}vd\sigma=0\right\},
$$
and
\begin{equation}\label{LB_minmax}
\lambda_j=\inf_{\substack{V\leq \tilde H^1(\partial\Omega),\\{\rm dim}V=j}}\sup_{\substack{0\ne v\in V,\\\int_{\partial\Omega}v^2d\sigma=1}}\int_{\partial\Omega}|\nabla_{\partial\Omega} v|^2d\sigma,
\end{equation}
for all $j\in\mathbb N$, $j\geq 1$, where
$$
\tilde H^1(\partial\Omega):=\left\{v\in H^1(\partial\Omega):\int_{\partial\Omega}vd\sigma=0\right\}.
$$

It is useful to recall the following results on the completeness of the sets of eigenfunctions of problems \eqref{Steklov} and \eqref{Laplace-Beltrami} in $L^2(\partial\Omega)$.

\begin{theorem}\label{LBort}
Let $\Omega$ be a bounded domain in $\mathbb R^{N+1}$ of class $C^2$. Let $\left\{\sigma_j\right\}_{j=0}^{\infty}$ be the sequence of eigenvalues of problem \eqref{Steklov} and let $\left\{u_j\right\}_{j=0}^{\infty}\subset H^1(\Omega)$ denote the sequence of eigenfunctions associated with the eigenvalues $\sigma_j$, normalized such that $\int_{\partial\Omega}u_iu_k d\sigma=\delta_{ik}$ for all $i,k\in\mathbb N$. Then $\left\{{u_j}{|_{\partial\Omega}}\right\}_{j=0}^{\infty}$ is an orthonormal basis of $L^2(\partial\Omega)$. Moreover, $\int_{\Omega}\nabla u_i\cdot\nabla u_k dx=\sigma_i\delta_{ik}$ for all $i,k\in\mathbb N$.
\end{theorem}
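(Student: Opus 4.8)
The plan is to identify the boundary traces of the Steklov eigenfunctions with the eigenfunctions of a compact, self-adjoint, positive operator on $L^2(\partial\Omega)$ and then apply the spectral theorem. First I would record the weak formulation of \eqref{Steklov}: a harmonic $u\in H^1(\Omega)$ is a Steklov eigenfunction for $\sigma$ exactly when
\[
\int_\Omega\nabla u\cdot\nabla v\,dx=\sigma\int_{\partial\Omega}uv\,d\sigma\qquad\text{for all }v\in H^1(\Omega),
\]
which is Green's identity combined with $\Delta u=0$. Testing the equation for $u_i$ against $v=u_k$ and using the assumed normalization $\int_{\partial\Omega}u_iu_k\,d\sigma=\delta_{ik}$ yields the ``moreover'' assertion $\int_\Omega\nabla u_i\cdot\nabla u_k\,dx=\sigma_i\delta_{ik}$ directly; this requires no completeness, so the whole content of the theorem is the basis property.

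To obtain the orthonormal basis I would introduce the bilinear form
\[
b(u,v):=\int_\Omega\nabla u\cdot\nabla v\,dx+\int_{\partial\Omega}uv\,d\sigma
\]
on $H^1(\Omega)$, which is an inner product equivalent to the usual one by a standard Poincar\'e-type trace inequality. For $f\in L^2(\partial\Omega)$ let $u_f\in H^1(\Omega)$ be the unique solution of $b(u_f,v)=\int_{\partial\Omega}fv\,d\sigma$ for all $v\in H^1(\Omega)$; testing against $v\in C_c^\infty(\Omega)$ shows $u_f$ is harmonic, and the remaining boundary terms give $\partial_\nu u_f+u_f=f$ on $\partial\Omega$. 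Defining $Tf:=u_f|_{\partial\Omega}$ produces a map $T\colon L^2(\partial\Omega)\to L^2(\partial\Omega)$. Symmetry of $b$ yields $(Tf,g)_{L^2(\partial\Omega)}=b(u_f,u_g)=(f,Tg)_{L^2(\partial\Omega)}$, so $T$ is self-adjoint, while $(Tf,f)_{L^2(\partial\Omega)}=b(u_f,u_f)\geq0$, vanishing only for $f=0$, so $T$ is positive and injective.

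The hard part will be the compactness of $T$. Coercivity of $b$ gives $\|u_f\|_{H^1(\Omega)}\leq C\|f\|_{L^2(\partial\Omega)}$, so $T$ factors through the trace operator $H^1(\Omega)\to L^2(\partial\Omega)$; this trace operator is compact, being the bounded trace $H^1(\Omega)\to H^{1/2}(\partial\Omega)$ followed by the compact embedding $H^{1/2}(\partial\Omega)\hookrightarrow L^2(\partial\Omega)$, the latter by Rellich's theorem on the compact $C^2$ manifold $\partial\Omega$ (this is where the regularity of $\partial\Omega$ enters). Hence $T$ is compact, self-adjoint and positive, and the spectral theorem provides an orthonormal basis $\{\phi_j\}_{j\geq0}$ of $L^2(\partial\Omega)$ with eigenvalues $\mu_j>0$, $\mu_j\to0$.

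Finally I would unravel the eigenrelation $T\phi_j=\mu_j\phi_j$: the harmonic extension $u_{\phi_j}$ satisfies $u_{\phi_j}|_{\partial\Omega}=\mu_j\phi_j$ and $\partial_\nu u_{\phi_j}+u_{\phi_j}=\phi_j$, whence $\partial_\nu u_{\phi_j}=(\mu_j^{-1}-1)\,u_{\phi_j}|_{\partial\Omega}$. Thus each $\phi_j$ is a scalar multiple of the boundary trace of a Steklov eigenfunction with eigenvalue $\sigma_j=\mu_j^{-1}-1$ (the value $\mu_0=1$ recovering $\sigma_0=0$ and the constant eigenfunction), and $\mu_j\to0$ reproduces $\sigma_j\to\infty$. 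After rescaling to enforce $\int_{\partial\Omega}u_j^2\,d\sigma=1$, the traces $u_j|_{\partial\Omega}$ agree with $\{\phi_j\}$ up to sign and therefore form an orthonormal basis of $L^2(\partial\Omega)$, completing the argument.
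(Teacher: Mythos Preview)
Your argument is correct and is the standard route to this result: realize the Steklov problem via a compact, positive, self-adjoint operator on $L^2(\partial\Omega)$ and invoke the spectral theorem for such operators. The verification that $T$ is self-adjoint, positive, injective and compact is accurate; the only point worth tightening is the very last step, where you should note explicitly that the correspondence $u\mapsto u|_{\partial\Omega}$ is a bijection between the Steklov eigenspace for $\sigma$ and the $T$-eigenspace for $\mu=(1+\sigma)^{-1}$, so that \emph{any} orthonormal family $\{u_j\}$ as in the statement (i.e.\ exhausting each finite-dimensional eigenspace) has traces forming a complete orthonormal system, not merely the particular one produced by the spectral theorem.

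As for comparison with the paper: there is nothing to compare. The paper does not prove Theorem~\ref{LBort}; it simply records it as a known fact and refers to \cite{auchbases} (and \cite{brezis,daviesbook}) for a proof. Your write-up is precisely the kind of argument one finds in those references, so you have supplied what the paper chose to outsource.
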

We refer e.g., to \cite{auchbases} for a proof of Theorem \ref{LBort} (see also \cite{brezis,daviesbook}). 






\begin{theorem}\label{LBeigen}
Let $\Omega$ be a bounded domain in $\mathbb R^{N+1}$ of class $C^2$. Let $\left\{\lambda_j\right\}_{j=0}^{\infty}$ be the sequence of eigenvalues of problem \eqref{Laplace-Beltrami} and let $\left\{\varphi_j\right\}_{j=0}^{\infty}\subset H^1(\partial\Omega)$ denote the sequence of eigenfunctions associated with the eigenvalues $\lambda_j$, normalized such that $\int_{\partial\Omega}\varphi_i\varphi_k d\sigma=\delta_{ik}$ for all $i,k\in\mathbb N$.
 Then $\left\{\varphi_j\right\}_{j=0}^{\infty}$ is an orthonormal basis of $L^2(\partial\Omega)$.  Moreover, $\int_{\partial\Omega}\nabla_{\partial\Omega} \varphi_i\cdot\nabla_{\partial\Omega} \varphi_k d\sigma=\lambda_i\delta_{ik}$ for all $i,k\in\mathbb N$.
\end{theorem}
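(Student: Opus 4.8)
The plan is to treat this as the standard spectral theorem for the Laplace–Beltrami operator on a compact Riemannian manifold, which applies precisely because, as noted just above, $\partial\Omega$ is a compact $C^2$-submanifold of dimension $N$ without boundary. First I would set up the weak formulation: $\varphi\in H^1(\partial\Omega)$ is an eigenfunction of \eqref{Laplace-Beltrami} with eigenvalue $\lambda$ if and only if
\begin{equation*}
\int_{\partial\Omega}\nabla_{\partial\Omega}\varphi\cdot\nabla_{\partial\Omega}\psi\,d\sigma=\lambda\int_{\partial\Omega}\varphi\psi\,d\sigma
\end{equation*}
for all $\psi\in H^1(\partial\Omega)$. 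The associated bilinear form $a(\varphi,\psi):=\int_{\partial\Omega}\nabla_{\partial\Omega}\varphi\cdot\nabla_{\partial\Omega}\psi\,d\sigma$ is symmetric, bounded, and non-negative on $H^1(\partial\Omega)$, while the shifted form $a(\varphi,\psi)+\int_{\partial\Omega}\varphi\psi\,d\sigma$ is coercive, being exactly the square of the $H^1(\partial\Omega)$-norm.

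Next I would introduce the resolvent operator. By the Lax–Milgram theorem, for each $f\in L^2(\partial\Omega)$ there is a unique $u=Tf\in H^1(\partial\Omega)$ satisfying $a(u,\psi)+\int_{\partial\Omega}u\psi\,d\sigma=\int_{\partial\Omega}f\psi\,d\sigma$ for all $\psi\in H^1(\partial\Omega)$. Viewed as an operator on $L^2(\partial\Omega)$, $T$ is self-adjoint and positive: self-adjointness follows from the symmetry of the form, and positivity from testing with $\psi=u$. The crucial ingredient is that $T$ is compact. This follows from the Rellich–Kondrachov theorem on the compact manifold $\partial\Omega$, which guarantees that the embedding $H^1(\partial\Omega)\hookrightarrow L^2(\partial\Omega)$ is compact; since $T$ maps $L^2(\partial\Omega)$ boundedly into $H^1(\partial\Omega)$, composing with this compact embedding yields compactness of $T$ on $L^2(\partial\Omega)$.

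Applying the spectral theorem for compact, self-adjoint, positive operators to $T$ then produces a sequence of positive eigenvalues $\mu_0\geq\mu_1\geq\cdots\to 0$ whose eigenfunctions $\{\varphi_j\}_{j=0}^{\infty}$ form an orthonormal basis of $L^2(\partial\Omega)$. Unwinding the definition of $T$, each $\varphi_j$ is an eigenfunction of $-\Delta_{\partial\Omega}$ with eigenvalue $\lambda_j=\mu_j^{-1}-1$, and $\mu_j\to 0$ forces $\lambda_j\nearrow+\infty$; the constant function is the eigenfunction for $\lambda_0=0$. Finally, the gradient orthogonality is immediate from the weak formulation: testing the eigenvalue equation for $\varphi_i$ against $\psi=\varphi_k$ gives
\begin{equation*}
\int_{\partial\Omega}\nabla_{\partial\Omega}\varphi_i\cdot\nabla_{\partial\Omega}\varphi_k\,d\sigma=\lambda_i\int_{\partial\Omega}\varphi_i\varphi_k\,d\sigma=\lambda_i\delta_{ik},
\end{equation*}
using the $L^2(\partial\Omega)$-orthonormality of the $\varphi_j$. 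The only genuine obstacle is the compactness of the embedding $H^1(\partial\Omega)\hookrightarrow L^2(\partial\Omega)$; everything else is routine functional analysis. Since $\partial\Omega$ is a compact $C^2$ manifold, this compactness is classical and can be obtained by a partition of unity reducing to the Euclidean Rellich theorem in local charts, so in practice one simply cites the standard spectral theory of elliptic operators on compact manifolds (as the authors do for Theorem \ref{LBort}).
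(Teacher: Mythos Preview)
Your proposal is correct and follows exactly the approach the paper takes: the paper's proof is a single sentence stating that the result follows from standard spectral theory for linear operators together with the compactness of the embedding $H^1(\partial\Omega)\subset L^2(\partial\Omega)$, and you have simply (and accurately) fleshed out those details. There is nothing to add.
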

The proof of Theorem \ref{LBeigen} follows from standard spectral theory for linear operators (see \cite{brezis,daviesbook}) and from the compactness of the embedding $H^1(\partial\Omega)\subset L^2(\partial\Omega)$.



We are now ready to prove Theorem \ref{main-theorem}.

\begin{proof}[Proof of Theorem \ref{main-theorem}]
We start by proving $i)$. Let $u_1,...,u_j$ be the Steklov eigenfunctions associated with $\sigma_1,...,\sigma_j$ normalized such that $\int_{\partial\Omega}u_iu_kd\sigma=\delta_{ik}$, so that $\int_{\Omega}\nabla u_i\cdot\nabla u_k dx=\sigma_i\delta_{ik}$ for all $i,k=1,...,j$. Moreover $\int_{\partial\Omega}u_id\sigma=0$ for all $i=1,...,j$. From the regularity assumptions on $\Omega$, we have that $u_i$ are classical solutions, i.e., $u_i\in C^2(\Omega)\cap C^1(\overline\Omega)$ (see \cite{agmon1}). In particular, ${u_i}_{|_{\partial\Omega}}\in \tilde H^1(\partial\Omega)$ and $\frac{\partial u_i}{\partial\nu}=\sigma_i u$ on $\partial\Omega$, for all $i=1,...,j$. Let $V\subset\tilde H^1(\partial\Omega)$ be the space generated by ${u_1}_{|_{\partial\Omega}},...,{u_j}_{|_{\partial\Omega}}$. Any function $u\in V$ with $\int_{\partial\Omega}u^2d\sigma=1$ can be written as $u=\sum_{i=1}^jc_i{u_i}_{|_{\partial\Omega}}$, where $c=(c_1,...,c_j)\in\mathbb R^j$ is such that $|c|=1$, i.e., $c\in\partial\mathbb B^j$ and $\mathbb B^j$ is the unit ball in $\mathbb R^j$. Moreover $\Delta u=0$ for all $u\in V$. From \eqref{LB_minmax} and \eqref{estimate_tangential} we have
\begin{multline*}
\lambda_j\leq\max_{\substack{0\ne u\in V\\\int_{\partial\Omega}u^2d\sigma=1}}\int_{\partial\Omega}|\nabla_{\partial\Omega} u|^2d\sigma=\max_{\substack{c\in\mathbb B^j\\c=(c_1,...,c_j)}}\int_{\partial\Omega}\left|\nabla_{\partial\Omega}\left(\sum_{i=1}^jc_iu_i\right)\right|^2d\sigma\\
\leq \max_{\substack{c\in\mathbb B^j\\c=(c_1,...,c_j)}}\left(\int_{\partial\Omega}\left(\frac{\partial \left(\sum_{i=1}^jc_iu_i\right)}{\partial\nu}\right)^2d\sigma+{2c_{\Omega}}\left(\int_{\partial\Omega}\left(\frac{\partial \left(\sum_{i=1}^jc_iu_i\right)}{\partial\nu}\right)^2d\sigma\right)^{\frac{1}{2}}\right)\\
=\max_{\substack{c\in\mathbb B^j\\c=(c_1,...,c_j)}}\left(\int_{\partial\Omega}\left(\sum_{i=1}^jc_i\sigma_iu_i\right)^2d\sigma+{2c_{\Omega}}\left(\int_{\partial\Omega}\left(\sum_{i=1}^jc_i\sigma_iu_i\right)^2d\sigma\right)^{\frac{1}{2}}\right)\\
=\max_{\substack{c\in\mathbb B^j\\c=(c_1,...,c_j)}}\left(\sum_{i=1}^jc_i^2\sigma_i^2+{2c_{\Omega}}\left(\sum_{i=1}^jc_i^2\sigma_i^2\right)^{\frac{1}{2}}\right)=\sigma_j^2+{2c_{\Omega}}\sigma_j.
\end{multline*}
This proves $i)$. In an analogous way we prove $ii)$. Let $\varphi_1,...,\varphi_j\in H^1(\partial\Omega)$ be the eigenfunctions associated with the eigenvalues $\lambda_1,...,\lambda_j$ of problem \eqref{Laplace-Beltrami}, normalized such that $\int_{\partial\Omega}\varphi_i\varphi_kd\sigma=\delta_{ik}$ for all $i,k=1,...,j$. Then $\int_{\partial\Omega}\nabla_{\partial\Omega}u_i\cdot\nabla_{\partial\Omega} u_k d\sigma=\lambda_i\delta_{ik}$ for all $i,k=1,...,j$. Moreover $\int_{\partial\Omega}\varphi_id\sigma=0$ for all $i=1,...,j$, thus $\varphi_i\in\tilde H^1(\partial\Omega)$. Now let $\phi_i$, $i=1,...,j$ be the solutions to
\begin{equation}\label{dirichlet}
\begin{cases}
\Delta\phi_i=0, & {\rm in\ }\Omega,\\
\phi_i=\varphi_i, & {\rm on\ }\partial\Omega.
\end{cases}
\end{equation}
It is standard to prove that for all $i=1,...,j$, problem \eqref{dirichlet} admits a unique solution $\phi_i$ which is harmonic inside $\Omega$ and which coincides with $\varphi_i$ on $\partial\Omega$ (see e.g., \cite[Theroem 2.14]{gitr}. From the fact that $\Omega$ is of class $C^2$ and from standard elliptic regularity (see \cite{agmon1}) it follows that $\phi_i\in C^2(\Omega)\cap C^0(\overline\Omega)$. Moreover $\int_{\partial\Omega}{\phi_i}_{|_{\partial\Omega}}d\sigma=\int_{\partial\Omega}\varphi_id\sigma=0$ for all $i=1,...,j$, thus $\phi_i\in \tilde H^1(\Omega)$ for all $i=1,...,j$. Let $W\subset\tilde H^1(\Omega)$ be the space generated by $\phi_1,...\phi_j$. Any function $\phi\in W$ with $\int_{\partial\Omega}\phi^2d\sigma=1$ can be written as $\phi=\sum_{i=1}^jc_i\phi_i$ with $c=(c_1,...,c_j)\in\mathbb B^j$. Moreover $\Delta\phi=0$ for all $\phi\in V$. Thanks to \eqref{estimate_normal} and \eqref{steklov_minmax} we have
\begin{multline*}
\sigma_j\leq\max_{\substack{0\ne \phi\in W\\\int_{\partial\Omega}\phi^2d\sigma=1}}\int_{\Omega}|\nabla \phi|^2dx= \max_{\substack{c\in\mathbb B^j\\c=(c_1,...,c_j)}}\int_{\Omega}\left|\nabla\left(\sum_{i=1}^jc_i\phi_i\right)\right|^2dx\\
\leq \max_{\substack{c\in\mathbb B^j\\c=(c_1,...,c_j)}}\left(\int_{\partial\Omega} \left(\frac{\partial\left(\sum_{i=1}^jc_i\phi_i\right)}{\partial\nu}\right)^2d\sigma\right)^{\frac{1}{2}}\\
\leq {c_{\Omega}}+\left({c_{\Omega}^2}+\max_{\substack{c\in\mathbb B^j\\c=(c_1,...,c_j)}}\int_{\partial\Omega}\left|\nabla_{\partial\Omega}\left(\sum_{i=1}^jc_i\phi_i\right)\right|^2\right)^{\frac{1}{2}}\\
= {c_{\Omega}}+\left( {c_{\Omega}^2}+\max_{\substack{c\in\mathbb B^j\\c=(c_1,...,c_j)}}\int_{\partial\Omega}\left|\nabla_{\partial\Omega}\left(\sum_{i=1}^jc_i\varphi_i\right)\right|^2\right)^{\frac{1}{2}}\\
\leq {c_{\Omega}}+\left( {c_{\Omega}^2}+\max_{\substack{c\in\mathbb B^j\\c=(c_1,...,c_j)}}\sum_{i=1}^jc_i^2\lambda_i\right)^{\frac{1}{2}}
={c_{\Omega}}+\sqrt{ {c_{\Omega}^2}+\lambda_j}.
\end{multline*}
This concludes the proof of $ii)$ and of the theorem.
\end{proof}

Theorem \ref{main-theorem} not only confirms the Weyl asymptotic behavior $\lim_{j\rightarrow\infty}{\sqrt{\lambda_j}}/{\sigma_j}=1$, but says that the difference between the eigenvalues is given at most by a constant independent of $j$.


 By combining \eqref{steklov-laplace-beltrami-ev-comparison-1} with \eqref{laplace-beltrami-ev-weyl-upper-bound} we can now bound the Steklov eigenvalues from above. To this purpose, it is convenient to specify the constants $a_{\partial\Omega}$ and $b_N$ in \eqref{laplace-beltrami-ev-weyl-upper-bound} by recalling the following theorem from \cite{buser}. We  will denote by $Ric_g(M)$ the Ricci curvature tensor of a Riemannian manifold $(M,g)$. Accordingly, $Ric_g(\partial\Omega)$ will denote the Ricci curvature tensor of the submanifold $\partial\Omega$ equipped with the induced Riemannian metric $g$. 
\begin{theorem}\label{buser}
Let $(M,g)$ be a compact Riemannian manifold without boundary of dimension $N$ such that $Ric_g(M)\geq-(N-1)\kappa^2$, $\kappa>0$. Then
\begin{equation}\label{upperbound_LB}
\lambda_j\leq\frac{(N-1)\kappa^2}{4}+c_N\left(\frac{j}{Vol(M)}\right)^{\frac{2}{N}},
\end{equation}
where $c_N>0$ depends only on $N$.
\end{theorem}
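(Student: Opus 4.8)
The plan is to prove \eqref{upperbound_LB} by the classical packing-and-test-function method, combining the variational (min-max) characterization of $\lambda_j$ with the Bishop--Gromov volume comparison theorem, which is the only place where the hypothesis $Ric_g(M)\geq-(N-1)\kappa^2$ enters. Since $M$ is compact without boundary we have $0=\lambda_0\leq\lambda_1\leq\cdots$, and by min-max it suffices to exhibit, for each $j$, a $(j+1)$-dimensional subspace $V\subset H^1(M)$ on which the Rayleigh quotient $R(u)=\int_M|\nabla u|^2\,dx\big/\int_M u^2\,dx$ does not exceed the right-hand side of \eqref{upperbound_LB}.

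First I would fix a radius $r>0$, to be optimized later, and choose a maximal $2r$-separated set $\{p_0,\dots,p_m\}\subset M$. By maximality the balls $B(p_i,2r)$ cover $M$, while the balls $B(p_i,r)$ are pairwise disjoint. Let $V_\kappa(\rho)$ denote the volume of a geodesic ball of radius $\rho$ in the simply connected space form of constant curvature $-\kappa^2$. Bishop's inequality gives $Vol(B(p_i,2r))\leq V_\kappa(2r)$, so the covering property yields $Vol(M)\leq(m+1)V_\kappa(2r)$; hence choosing $r$ so that $V_\kappa(2r)\leq Vol(M)/(j+1)$ guarantees $m\geq j$, that is, at least $j+1$ disjoint balls $B(p_i,r)$. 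On each such ball I would place a Lipschitz ``tent'' $f_i$ equal to $1$ on $B(p_i,r/2)$, equal to $0$ outside $B(p_i,r)$, and interpolating with $|\nabla f_i|\leq 2/r$ in the annulus. The supports are disjoint, so $\{f_0,\dots,f_j\}$ spans a $(j+1)$-dimensional subspace on which the Rayleigh quotient of any linear combination is bounded by $\max_i R(f_i)$, and a direct estimate together with the monotonicity form of Bishop--Gromov gives
\begin{equation*}
R(f_i)\leq\frac{4}{r^2}\,\frac{Vol(B(p_i,r))}{Vol(B(p_i,r/2))}\leq\frac{4}{r^2}\,\frac{V_\kappa(r)}{V_\kappa(r/2)}.
\end{equation*}
Min-max then yields $\lambda_j\leq \frac{4}{r^2}\,V_\kappa(r)/V_\kappa(r/2)$, and it remains to insert the radius determined by the volume constraint and to analyse the resulting function of $r$.

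The main obstacle is precisely this last analysis: one must show that the single quantity $\frac{4}{r^2}V_\kappa(r)/V_\kappa(r/2)$, evaluated at the $r$ solving $V_\kappa(2r)\sim Vol(M)/(j+1)$, splits into the additive curvature term $\frac{(N-1)\kappa^2}{4}$ plus a term of pure order $(j/Vol(M))^{2/N}$ with a constant $c_N$ depending only on $N$ and not on $\kappa$. Writing $s=\kappa r$ and $V_\kappa(\rho)=\omega_{N-1}\kappa^{-N}\int_0^{\kappa\rho}\sinh^{N-1}t\,dt$ reduces the whole expression to a dimensionless profile in $s$. In the small-$s$ regime, where $\sinh t\approx t$, the ratio $V_\kappa(r)/V_\kappa(r/2)$ tends to $2^N$ and the volume constraint forces $1/r^2\sim(j/Vol(M))^{2/N}$, producing the Weyl-type term; the curvature contribution is concentrated in the complementary large-$s$ regime, where the exponential growth of $\sinh^{N-1}$ must be handled carefully.

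Matching the two regimes so as to obtain the \emph{sharp} constant $\frac{N-1}{4}$ in front of $\kappa^2$, rather than a cruder dimensional multiple of $\kappa^2$, is the delicate point of the proof. This typically requires refining the crude linear tent into an optimized radial profile $u(d(\cdot,p_i))$ against the weight $\sinh^{N-1}(\kappa r)$ and estimating the associated one-dimensional Rayleigh quotient, together with a separate treatment of the low-frequency range (small $j$, where the constant term dominates) and the high-frequency range (large $j$, where the volume term dominates). I expect the packing, counting, and disjoint-support arguments to be routine given Bishop--Gromov, and the entire difficulty to lie in this two-scale optimization that pins down both $\frac{(N-1)\kappa^2}{4}$ and the dimensional constant $c_N$.
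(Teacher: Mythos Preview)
The paper does not prove this theorem at all: it is quoted verbatim from the reference \cite{buser} and used as a black-box input to derive Corollary~\ref{steklov_upper_bounds_thm}. There is therefore nothing in the paper to compare your argument against.

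That said, your outline is the correct strategy behind Buser's original proof: a maximal $2r$-separated net, Bishop--Gromov to count the resulting disjoint balls, disjointly supported Lipschitz tents as test functions, and min-max. Your diagnosis of the difficulty is also accurate. The crude tent estimate you wrote gives a bound of the form $C_N\kappa^2 + c_N(j/Vol(M))^{2/N}$ with some dimensional constant $C_N$, but not the specific coefficient $\frac{N-1}{4}$ stated here; isolating that sharp curvature term requires an additional ingredient beyond the linear tent (in Buser's argument, a separate treatment of the low-$j$ range via comparison with Dirichlet eigenvalues of balls in the model space, rather than an optimized radial profile). So your plan would yield \emph{a} bound of the right shape, but pinning down the constant $\frac{N-1}{4}$ is, as you suspected, where the real work lies and would need a further idea you have not yet supplied.
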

From Theorems \ref{main-theorem} and \ref{buser} it immediately follows

\begin{corollary}\label{steklov_upper_bounds_thm}
Let $\Omega$ be a bounded domain of class $C^2$ in $\mathbb R^{N+1}$ such that $\partial\Omega$ has only one connected component. Then for all $j\in\mathbb N$ it holds
\begin{equation}\label{steklov_upper_bounds}
\sigma_j\leq a_{\Omega}+c_N^{\frac{1}{2}}\left(\frac{j}{|\partial\Omega|}\right)^{\frac{1}{N}},
\end{equation}
where $a_{\Omega}>0$ depends on the dimension $N$, on the maximal mean curvature of $\partial\Omega$, on a lower bound of the Ricci curvature of $\partial\Omega$ and on the maximal size of a tubular neighborhood about $\partial\Omega$, and $c_N>0$ is as in Theorem \ref{buser} and depends only on the dimension $N$.

\proof
It suffices just to combine \eqref{upperbound_LB} with the second inequality in \eqref{steklov-laplace-beltrami-ev-comparison-1}. We have
\begin{multline}\label{chainbounds}
\sigma_j\leq{c_{\Omega}}+\sqrt{ {c_{\Omega}^2}+\frac{(N-1)\kappa^2}{4}+c_N\left(\frac{j}{Vol(M)}\right)^{\frac{2}{N-1}}}\\
\leq \left({2c_{\Omega}}+\frac{(N-2)\kappa}{2}\right)+c_N^{\frac{1}{2}}\left(\frac{j}{|\partial\Omega|}\right)^{\frac{1}{N-1}},
\end{multline}
where $\kappa>0$ is such that $Ric_g({\partial\Omega})\geq-(N-2)\kappa^2$. Since $\partial\Omega$ is a  compact submanifold in $\mathbb R^{N+1}$ of class $C^2$ and therefore $Ric_g({\partial\Omega})$ is continuous on $\partial\Omega$, such a $\kappa$ exists finite. From \eqref{bound_mean_curv} and from the proof of Theorem \ref{main-theorem}, we note that $c_{\Omega}$ in \eqref{chainbounds} can be replaced by $\frac{1}{\bar h}+\frac{NH_{\infty}}{2}$. This concludes the proof.
\endproof
\end{corollary}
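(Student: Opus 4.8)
The plan is to combine the sharp Weyl-type bound for Laplace--Beltrami eigenvalues provided by Theorem \ref{buser} with the second comparison inequality in \eqref{steklov-laplace-beltrami-ev-comparison-1} from Theorem \ref{main-theorem}, and then to isolate the leading $j$-dependent term by elementary subadditivity of the square root. First I would apply Theorem \ref{buser} to the Riemannian manifold $M=\partial\Omega$: under the standing hypotheses $\partial\Omega$ is a compact $C^2$ submanifold of $\mathbb R^{N+1}$ of dimension $N$ without boundary, so its Ricci curvature $Ric_g(\partial\Omega)$ is continuous on the compact set $\partial\Omega$ and hence bounded below. Therefore a finite $\kappa>0$ exists with $Ric_g(\partial\Omega)\geq -(N-1)\kappa^2$, and Theorem \ref{buser} yields
\[
\lambda_j\leq \frac{(N-1)\kappa^2}{4}+c_N\left(\frac{j}{|\partial\Omega|}\right)^{2/N},
\]
where $c_N$ depends only on $N$ and $Vol(\partial\Omega)=|\partial\Omega|$.

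Next I would substitute this bound into $\sigma_j\leq c_\Omega+\sqrt{c_\Omega^2+\lambda_j}$, obtaining
\[
\sigma_j\leq c_\Omega+\sqrt{c_\Omega^2+\frac{(N-1)\kappa^2}{4}+c_N\left(\frac{j}{|\partial\Omega|}\right)^{2/N}}.
\]
The one nontrivial algebraic step is the subadditivity inequality $\sqrt{a+b+c}\leq \sqrt a+\sqrt b+\sqrt c$, valid for $a,b,c\geq 0$, which lets me peel off the leading term while keeping exactly the coefficient $c_N^{1/2}$ dictated by the Weyl limit \eqref{steklov-ev-asymptotics}. Applying it gives
\[
\sigma_j\leq 2c_\Omega+\frac{\sqrt{N-1}}{2}\,\kappa+c_N^{1/2}\left(\frac{j}{|\partial\Omega|}\right)^{1/N},
\]
so that \eqref{steklov_upper_bounds} holds with $a_\Omega:=2c_\Omega+\tfrac{\sqrt{N-1}}{2}\kappa$.

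Finally I would record the dependencies: through $c_\Omega=\frac{1}{2\bar h}+\frac{N\bar{H}_\infty}{2}$ the constant $a_\Omega$ depends on $N$, on the maximal mean of the absolute values of the principal curvatures $\bar{H}_\infty$, and on the maximal tubular radius $\bar h$; through $\kappa$ it depends on a lower bound for the Ricci curvature of $\partial\Omega$. Moreover, invoking the refined estimate \eqref{bound_mean_curv} from the preceding remark, $c_\Omega$ may be replaced throughout by $\frac{1}{\bar h}+\frac{NH_\infty}{2}$, so that the curvature dependence is through the genuine maximal mean curvature $H_\infty$ rather than $\bar{H}_\infty$. I do not expect a genuine obstacle here, since the corollary merely chains two already-established inequalities; the only point demanding care is the bookkeeping, ensuring that every contribution other than $c_N^{1/2}(j/|\partial\Omega|)^{1/N}$ is independent of $j$ and controlled by the stated geometric quantities, and that the square-root splitting does not inflate the coefficient of the leading power of $j$, which is precisely what makes \eqref{steklov_upper_bounds} asymptotically sharp.
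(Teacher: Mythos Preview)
Your proof is correct and follows essentially the same route as the paper's: both substitute Buser's bound \eqref{upperbound_LB} into the second inequality of \eqref{steklov-laplace-beltrami-ev-comparison-1} and then split the square root by subadditivity to isolate the $c_N^{1/2}(j/|\partial\Omega|)^{1/N}$ term. Your computation of $a_\Omega=2c_\Omega+\tfrac{\sqrt{N-1}}{2}\kappa$ is in fact cleaner than the paper's displayed chain \eqref{chainbounds}, which contains a few typographical slips in the exponents and constants.
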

We conclude this section with some remarks.


\begin{rem}
We remark that in \eqref{steklov_upper_bounds} we have separated the geometry from the asymptotic behavior of the Steklov eigenvalues. We also note that the constant $c_N$ in \eqref{upperbound_LB} (which depends only on the dimension) is not optimal, in the sense that it is strictly greater than the constant appearing in the Weyl's law of $\lambda_j$, as highlighted in \cite{buser}, thus the constant $c_N^{\frac{1}{2}}$ in \eqref{steklov_upper_bounds} is not optimal in this sense as well.
\end{rem}

\begin{rem}
We remark that the constant ${c_{\Omega}}$ in \eqref{steklov_upper_bounds} may become very big when $\Omega$ presents very thin parts (like in the case of dumbell domains), and this can happen also if the curvature remains uniformly bounded (see Figure \ref{F2}). In the case of convex sets, anyway, it is possible to improve the constant in \eqref{steklov-laplace-beltrami-ev-comparison-1}-\eqref{steklov-laplace-beltrami-ev-comparison-2} and therefore the bounds \eqref{steklov_upper_bounds} (see Section \ref{ex}).
\end{rem}

\begin{rem}\label{regularityremark}
We remark that Theorems \ref{laplace-beltrami-ev-asymptotics} and \ref{laplace-beltrami-ev-weyl-upper-bound} are usually stated for the eigenvalues of the Laplace-Beltrami operator on smooth Riemannian manifolds. Actually, it is sufficient that $\partial\Omega$ is a manifold of class $C^2$ for \eqref{laplace-beltrami-ev-asymptotics} and \eqref{laplace-beltrami-ev-weyl-upper-bound} to hold. In fact we can approximate $\partial\Omega$ with a sequence $\partial\Omega_{\varepsilon}$ of $C^{\infty}$ submanifolds such that $\partial\Omega=\psi_{\varepsilon}(\partial\Omega_{\varepsilon})$, where $\psi_{\varepsilon}$ is a diffeomorphism of class $C^2$ and $\|{\rm Id}-\psi_{\varepsilon}\|_{C^2(\partial\Omega_{\varepsilon})},\|{\rm Id}-\psi_{\varepsilon}^{(-1)}\|_{C^2(\partial\Omega)}\leq\varepsilon$. This follows from standard approximation of $C^k$ functions by $C^{\infty}$ (or analytic) functions (see \cite{whitneyapprox}). We also refer to \cite[Sec. 4.4]{pruss} for a more detailed construction of the approximating boundaries $\partial\Omega_{\varepsilon}$. It is then standard to prove that the eigenvalues of the Laplace-Beltrami operator on $\partial\Omega_{\varepsilon}$ pointwise converge the eigenvalues of the Laplace-Beltrami operator on $\partial\Omega$. This immediately follows from the min-max characterization of the eigenvalues \eqref{LB_minmax} (we also refer to \cite{laproeurasian,kalamata} for stability and continuity results for the eigenvalues of elliptic operators upon perturbations of some parameters entering the equation and to \cite{burenkovlamberti2007,burenkovlamberti2008,burenkovlamberti2012} and to the references therein for spectral stability results for eigenvalues upon perturbation of the domain). We also refer to \cite{colbois_conv,conv_sp_str} and to the references therein for more detailed information on the convergence of Riemannian manifolds and the convergence of the corresponding spectra of the Laplacian.  

Moreover, from the fact that $\|{\rm Id}-\psi_{\varepsilon}\|_{C^2(\partial\Omega_{\varepsilon})},\|{\rm Id}-\psi_{\varepsilon}^{(-1)}\|_{C^2(\partial\Omega)}\leq\varepsilon$, it follows that $|\partial\Omega_{\varepsilon}|\rightarrow|\partial\Omega|$ and if $\kappa>0$ is such that $Ric_{g}(\partial\Omega)\geq -(N-1)\kappa^2$, then there exists a sequence $\kappa_{\varepsilon}$ with $\kappa_{\varepsilon}\rightarrow\kappa$ as $\varepsilon\rightarrow 0$ such that $Ric_{g_{\varepsilon}}(\partial\Omega_{\varepsilon})\geq -(N-1)\kappa_{\varepsilon}^2$. Hence \eqref{laplace-beltrami-ev-asymptotics} and \eqref{laplace-beltrami-ev-weyl-upper-bound} hold if $\Omega$ is of class $C^2$.
\end{rem}



\section{Examples: convex domains and balls}\label{ex}
In this section we improve the constant in \eqref{steklov-laplace-beltrami-ev-comparison-1}-\eqref{steklov-laplace-beltrami-ev-comparison-2} and the bounds \eqref{steklov_upper_bounds} in the case when $\Omega$ is a convex and bounded domain of class $C^2$ and show that the corresponding estimates become sharp when $\Omega$ is a ball.

\subsection{Convex domains}
Let $\Omega$ be a convex domain of class $C^2$ in $\mathbb R^{N+1}$. It is well-known that in this case $\kappa_i(x)\geq 0$ for all $i=1,...,N$ and for all $x\in\partial\Omega$. Moreover Theorem \ref{tubular} holds for any ${h}\in]0,{1}/K_{\infty}[$ (see also \eqref{KKKK} for the definition of $K_{\infty}$). This follows from Blaschke's Rolling Theorem for $C^2$ convex domains (see \cite{brooks,delgado,how,kou}) and from \cite[Lemma 14.16]{gitr}. 

From \eqref{maxemin} and from the fact that $0\leq\rho_i(x)\leq 1$ for all $x\in\omega_h$ and $i=1,...,N+1$ (see also Remark \ref{convex}), it follows that
\begin{equation}\label{estimate_conv}
-\int_{\Omega}|\nabla v|^2 dx\leq\int_{\omega_{{h}}}|\nabla v|^2\Delta{\eta}-2(D^2{\eta}\cdot\nabla v)\cdot\nabla v dx\leq N\int_{\Omega}|\nabla v|^2 dx.
\end{equation}
Then, by following the same lines of the proof of Theorems \ref{main-theorem} and \ref{equivalence} and choosing $\bar h=1/K_{\infty}$, it is straightforward to prove the following:
\begin{theorem}
Let $\Omega$ be a bounded and convex domain of class $C^2$ in $\mathbb R^{N+1}$. Let $\sigma_j$ and $\lambda_j$, $j\in\mathbb N$, denote the eigenvalues of problems \eqref{Steklov} and \eqref{Laplace-Beltrami} respectively. Let $K_{\infty}$ be defined by \eqref{KKKK}. Then
\begin{enumerate}[i)]
\item
\begin{equation}\label{LB_bounds_conv}
\lambda_j\leq\sigma_j^2+NK_{\infty}\sigma_j;
\end{equation}
\item
\begin{equation*}
\sigma_j\leq \frac{K_{\infty}}{2}+\sqrt{\frac{K_{\infty}^2}{4}+\lambda_j}.
\end{equation*}
\end{enumerate}
\end{theorem}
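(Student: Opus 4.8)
The plan is to repeat verbatim the two-step argument used for Theorem~\ref{main-theorem}, but feeding into it the sharper one-sided bounds \eqref{estimate_conv} available for convex domains in place of the two-sided estimate \eqref{estimate}, and exploiting the fact that for convex $C^2$ domains the conclusions of Theorem~\ref{tubular} remain valid for every $h\in\,]0,1/K_\infty[$. Thus, in contrast to the general case, one may let $h$ approach the limiting radius $1/K_\infty$ rather than being confined to $]0,\bar h[$.

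First I would establish the convex analogues of the two inequalities of Theorem~\ref{equivalence}. Fix $h\in\,]0,1/K_\infty[$ and let $v\in H^2(\Omega)$ be harmonic with $\int_{\partial\Omega}v^2\,d\sigma=1$. Starting from the Pohozaev identity \eqref{main} and using the right-hand inequality in \eqref{estimate_conv} together with Green's identity $\int_\Omega|\nabla v|^2\,dx=\int_{\partial\Omega}v\,\frac{\partial v}{\partial\nu}\,d\sigma$ and H\"older's inequality, exactly as in the chain \eqref{chain1}, I obtain
\[
\int_{\partial\Omega}|\nabla_{\partial\Omega}v|^2\,d\sigma\le\int_{\partial\Omega}\Big(\tfrac{\partial v}{\partial\nu}\Big)^2 d\sigma+\frac{N}{h}\Big(\int_{\partial\Omega}\Big(\tfrac{\partial v}{\partial\nu}\Big)^2 d\sigma\Big)^{1/2}.
\]
Symmetrically, using the left-hand inequality in \eqref{estimate_conv} in place of \eqref{estimate} in \eqref{lower0}, and solving the resulting quadratic inequality in the unknown $\big(\int_{\partial\Omega}(\partial v/\partial\nu)^2 d\sigma\big)^{1/2}\ge0$ as in \eqref{chain2}, I obtain
\[
\Big(\int_{\partial\Omega}\Big(\tfrac{\partial v}{\partial\nu}\Big)^2 d\sigma\Big)^{1/2}\le\frac{1}{2h}+\sqrt{\frac{1}{4h^2}+\int_{\partial\Omega}|\nabla_{\partial\Omega}v|^2\,d\sigma}.
\]
Since both inequalities hold for every $h\in\,]0,1/K_\infty[$, I pass to the limit $h\uparrow 1/K_\infty$, so that $N/h\to NK_\infty$ and $1/(2h)\to K_\infty/2$; this replaces the constant $2c_\Omega$ (respectively $c_\Omega$) of Theorem~\ref{equivalence} by $NK_\infty$ (respectively $K_\infty/2$).

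Second, I would insert these two inequalities into the min-max scheme of the proof of Theorem~\ref{main-theorem}, with no other change. For i), I take $V\subset\tilde H^1(\partial\Omega)$ spanned by the boundary traces of the first $j$ Steklov eigenfunctions $u_1,\dots,u_j$, use $\frac{\partial u_i}{\partial\nu}=\sigma_i u_i$ on $\partial\Omega$ together with the orthonormality relations of Theorem~\ref{LBort}, and optimize over $c\in\partial\mathbb{B}^j$; the first displayed inequality then yields $\lambda_j\le\sigma_j^2+NK_\infty\sigma_j$. For ii), I take $W\subset\tilde H^1(\Omega)$ spanned by the harmonic extensions $\phi_1,\dots,\phi_j$ of the first $j$ Laplace--Beltrami eigenfunctions, use $\int_\Omega|\nabla\phi|^2\,dx=\int_{\partial\Omega}(\partial\phi/\partial\nu)^2\,d\sigma$ and the orthonormality of Theorem~\ref{LBeigen}, so that the second displayed inequality gives $\sigma_j\le\frac{K_\infty}{2}+\sqrt{\frac{K_\infty^2}{4}+\lambda_j}$.

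The only genuine point to check --- everything else being a transcription of the earlier arguments --- is the admissibility of the limiting radius $h=1/K_\infty$. This rests on the fact, recalled via Blaschke's Rolling Theorem for $C^2$ convex bodies, that the unique-nearest-point property, and hence the $C^2$ regularity of the squared distance function and the formula of Theorem~\ref{eigenvalues_dist_sq}, persist on $\omega_h$ for all $h<1/K_\infty$; since the two inequalities above are uniform in $h$ on this range, taking the supremum over $h$ (equivalently the limit $h\uparrow1/K_\infty$) is legitimate and produces the stated sharp constants.
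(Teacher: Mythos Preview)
Your proposal is correct and follows essentially the same route as the paper, which simply says to repeat the proofs of Theorems~\ref{equivalence} and~\ref{main-theorem} using \eqref{estimate_conv} and taking $\bar h=1/K_\infty$. One small slip: in your sketch of part~ii) you write $\int_\Omega|\nabla\phi|^2\,dx=\int_{\partial\Omega}(\partial\phi/\partial\nu)^2\,d\sigma$, but what is actually used (and what the paper uses) is Green's identity $\int_\Omega|\nabla\phi|^2\,dx=\int_{\partial\Omega}\phi\,\partial\phi/\partial\nu\,d\sigma$ followed by Cauchy--Schwarz and the normalization, giving $\int_\Omega|\nabla\phi|^2\,dx\le\big(\int_{\partial\Omega}(\partial\phi/\partial\nu)^2\,d\sigma\big)^{1/2}$; with this correction the argument goes through exactly as you describe.
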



We note that when $\Omega$ is a bounded and convex domain of class $C^2$, $Ric_g(\partial\Omega)\geq 0$. Accordingly, as a consequence of Theorem \ref{buser}, we have the following:
\begin{corollary}
Let $\Omega$ be a bounded and convex domain of class $C^2$ in $\mathbb R^{N+1}$. Let $\sigma_j$ and $\lambda_j$, $j\in\mathbb N$, denote the eigenvalues of problem \eqref{Steklov} and \eqref{Laplace-Beltrami} respectively. Let $K_{\infty}$ be defined by \eqref{KKKK}. Then
\begin{equation*}
\sigma_j\leq K_{\infty}+c_N^{\frac{1}{2}}\left(\frac{j}{|\partial\Omega|}\right)^{\frac{1}{N}}.
\end{equation*}
\end{corollary}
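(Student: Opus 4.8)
The plan is to chain together the convex-domain eigenvalue comparison of inequality \eqref{LB_bounds_conv}, or rather its companion estimate $\sigma_j\leq \frac{K_{\infty}}{2}+\sqrt{\frac{K_{\infty}^2}{4}+\lambda_j}$, with the Buser-type upper bound of Theorem \ref{buser}, exactly as was done in Corollary \ref{steklov_upper_bounds_thm} for the general case. The only new input needed is that a bounded convex domain of class $C^2$ has nonnegative Ricci curvature on its boundary, which allows us to take $\kappa=0$ in Theorem \ref{buser} and thereby eliminate the curvature term $\frac{(N-1)\kappa^2}{4}$ from the Laplace--Beltrami bound.

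First I would record that since $\Omega$ is convex and $C^2$, all principal curvatures satisfy $\kappa_i(x)\geq 0$, and hence by the Gauss equation the induced metric $g$ on the hypersurface $\partial\Omega$ has $Ric_g(\partial\Omega)\geq 0$. This means Theorem \ref{buser} applies with any $\kappa>0$, and letting $\kappa\to 0^+$ (or directly observing that the hypothesis $Ric_g\geq -(N-1)\kappa^2$ holds for all $\kappa>0$) gives
\begin{equation*}
\lambda_j\leq c_N\left(\frac{j}{|\partial\Omega|}\right)^{\frac{2}{N}},
\end{equation*}
since $Vol(\partial\Omega)=|\partial\Omega|$.

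Next I would substitute this into the second inequality of the preceding convex-domain theorem. Using $\sqrt{a+b}\leq\sqrt a+\sqrt b$ for $a,b\geq 0$, we obtain
\begin{equation*}
\sigma_j\leq \frac{K_{\infty}}{2}+\sqrt{\frac{K_{\infty}^2}{4}+\lambda_j}
\leq \frac{K_{\infty}}{2}+\frac{K_{\infty}}{2}+\sqrt{\lambda_j}
\leq K_{\infty}+c_N^{\frac{1}{2}}\left(\frac{j}{|\partial\Omega|}\right)^{\frac{1}{N}},
\end{equation*}
which is precisely the claimed bound. I do not expect any genuine obstacle here: the argument is a direct specialization of Corollary \ref{steklov_upper_bounds_thm}, and the only place demanding a little care is the passage $\kappa\to 0$ in Theorem \ref{buser}, which is legitimate because the constant $c_N$ depends only on the dimension and not on $\kappa$, so the bound survives the limit. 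The elementary inequality $\sqrt{\frac{K_\infty^2}{4}+\lambda_j}\leq \frac{K_\infty}{2}+\sqrt{\lambda_j}$ is what produces the clean leading constant $K_{\infty}$ in front, matching the improved convex estimates of this section.
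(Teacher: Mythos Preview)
Your argument is correct and matches the paper's intended proof: the paper states just before the corollary that $Ric_g(\partial\Omega)\geq 0$ for convex $C^2$ domains and then deduces the result ``as a consequence of Theorem \ref{buser}'', which is exactly your chain $\sigma_j\leq \frac{K_\infty}{2}+\sqrt{\frac{K_\infty^2}{4}+\lambda_j}\leq K_\infty+\sqrt{\lambda_j}\leq K_\infty+c_N^{1/2}(j/|\partial\Omega|)^{1/N}$, mirroring the computation in Corollary \ref{steklov_upper_bounds_thm} with $c_\Omega$ replaced by $K_\infty/2$ and $\kappa=0$.
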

We note that the geometry of the set enters in the estimate only by means of the maximum of the principal curvatures.
\begin{rem}
Suppose that $\Omega$ is a convex and bounded domain of class $C^2$ such that $\left(\sum_{i=1}^{N}\rho_i(x)-1\right)\geq 0$ for all $x\in\omega_{{h}}$. Then by \eqref{estimate_conv} and by the same arguments in the proof of Theorems \ref{main-theorem} and \ref{equivalence} we have
$$
\sigma_j\leq c_N^{\frac{1}{2}}\left(\frac{j}{|\partial\Omega|}\right)^{\frac{1}{N}}
$$
for all $j\in\mathbb N$.
\end{rem}

\subsection{Balls}\label{balls}
Let $\Omega$ be a ball of radius $R$ in $\mathbb R^{N+1}$. We can suppose without loss of generality that it is centered at the origin. We are allowed to take ${h}=R-\delta$ for all $\delta\in]0,R[$ through Sections \ref{sec:pre},\ref{sec:har} and \ref{sec:comp}. By letting $\delta\rightarrow 0$, the expression for the vector field given by $F$ in \eqref{field} simplifies to $F(x)=x$ for all $x\in\Omega$. We use $F(x)=x$ in \eqref{pohozaev} and we obtain that for all $v\in H^2(\Omega)$ with $\Delta v=0$ in $\Omega$ it holds:
\begin{enumerate}[i)]
\item
\begin{equation}\label{Ball1}
\int_{\partial\Omega}|\nabla_{\partial\Omega}v|^2d\sigma=\int_{\partial\Omega}\left(\frac{\partial v}{\partial\nu}\right)^2d\sigma+\frac{N-1}{R}\int_{\Omega}|\nabla v|^2 d\sigma;
\end{equation}
\item
\begin{equation}\label{Ball2}
\int_{\partial\Omega}\left(\frac{\partial v}{\partial\nu}\right)^2d\sigma=\int_{\partial\Omega}|\nabla_{\partial\Omega}v|^2d\sigma-\frac{N-1}{R}\int_{\Omega}|\nabla v|^2 d\sigma.
\end{equation}
\end{enumerate}
We find then that
\begin{enumerate}[i)]
\item
\begin{equation}\label{LB_bounds_conv_B}
\lambda_j\leq\sigma_j^2+\frac{(N-1)}{R}\sigma_j;
\end{equation}
\item
\begin{equation}\label{steklov_bounds_conv_B}
\sigma_j\leq\sqrt{\frac{(N-1)^2}{4R^2}+\lambda_j}-\frac{N-1}{2R}.
\end{equation}
\end{enumerate}
Inequality \ref{LB_bounds_conv_B} follows immediately from \eqref{Ball1} by the same arguments as in the proof of Theorems \ref{equivalence} and \ref{main-theorem}. For \eqref{steklov_bounds_conv_B}, we note that if $\varphi_j\in H^1(\partial\Omega)$ is an eigenfunction associated with the eigenvalue $\lambda_j$ of \eqref{Laplace-Beltrami} and if we denote by $\phi_j$ the unique solution to \eqref{dirichlet}, then from \eqref{Ball2} we have
\begin{multline*}
0=\lambda_j-\int_{\partial\Omega}\left(\frac{\partial \phi_j}{\partial\nu}\right)^2d\sigma-\frac{N-1}{R}\int_{\Omega}|\nabla \phi_j|^2 d\sigma\\
\leq \lambda_j-\left(\int_{\Omega}|\nabla\phi_j|^2dx\right)^2-\frac{N-1}{R}\int_{\Omega}|\nabla\phi_j|^2 dx.
\end{multline*}
This in particular implies
\begin{equation*}
\int_{\Omega}|\nabla\phi_j|^2dx\leq\sqrt{\frac{(N-1)^2}{4R^2}+\lambda_j}-\frac{N-1}{2}
\end{equation*}
and therefore, by the min-max principle \eqref{steklov_minmax}, the validity of \eqref{steklov_bounds_conv_B}. Combining \eqref{LB_bounds_conv_B} with \eqref{steklov_bounds_conv_B} we immediately obtain the exact relation among the eigenvalues of problems \eqref{Steklov} and \eqref{Laplace-Beltrami} on $\Omega$ and $\partial\Omega$ respectively, without knowing explicitly the eigenvalues. Namely we have the following:
\begin{equation}\label{equal}
\lambda_j=\sigma_j^2+\frac{(N-1)}{R}\sigma_j.
\end{equation}

For the reader convenience, we briefly recall the explicit formulas for the Laplacian eigenvalues on $\partial\Omega$ and the Steklov eigenvalues on $\Omega$. An eigenvalue $\lambda$ of the Laplace-Beltrami operator on $\partial\Omega$ is of the form $\lambda=\frac{l(l+N-1)}{R^2}$, with $l\in\mathbb N$. Let us denote by $H_l$ a spherical harmonic of degree $l$ in $\mathbb R^{N+1}$. An eigenfunction associated with the eigenvalue $\frac{l(l+N-1)}{R^2}$ is of the form $H_l(x/R)$, $x\in\partial\Omega$. Hence the multiplicity of the eigenvalue $\lambda=\frac{l(l+N-1)}{R^2}$ equals the dimension $d_l$ of the space of the spherical harmonics of degree $l$ in $\mathbb R^{N+1}$, namely $d_l=(2l+N-1)\frac{(l+N-2)!}{l!(N-1)!}$. On the other hand, a Steklov eigenvalue $\sigma$ on $\Omega$ is of the form $\sigma=\frac{l}{R}$ with $l\in\mathbb N$. The corresponding eigenfunctions are the restriction to $\Omega$ of the harmonic polynomials on $\mathbb R^{N+1}$ of degree $l$. Clearly the eigenvalues $\frac{l(l+N-1)}{R^2}$ and $\frac{l}{R}$ have the same multiplicity $d_l$. It is now immediate to see that formula \eqref{equal} holds true.



\subsection{A further example: a bounded and convex domain of class $C^{1,1}$}.

Throughout the paper we have considered bounded domains of class $C^2$. This is a sufficient condition to ensure the validity of Theorems \ref{tubular0} and \ref{tubular}. Actually, Theorems \ref{tubular0} and \ref{tubular} may hold also under lower regularity assumptions on $\Omega$. It is known that the existence of a tubular neighborhood $\omega_h$ of $\partial\Omega$ as in Theorem \ref{tubular0} implies that the distance function from $\partial\Omega$ is a function of class $C^{1,1}$ on $\omega_h$. We refer to \cite[Ch.7]{shapes} for a more detailed discussion on sets of positive reach.

We construct now a convex subset $\Omega$ of $\mathbb R^3$ of class $C^{1,1}$ such that the set of points in $\Omega$ where the distance function is not differentiable has zero Lebesgue measure (in particular, it is a segment) and such that $\left(\sum_{i=1}^{3}\rho_i(x)-1\right)\geq 0$.
Let $x=(x_1,x_2,x_3)$ denotes an element of $\mathbb R^3$. Let $L,R>0$ be fixed real numbers. Let $x_0^+:=(0,0,L)$ and $x_0^-:=(0,0,-L)$. Let $\Omega\subset\mathbb R^3$ be defined by
\begin{equation*}
\Omega:=\Omega_1\cup\Omega_2\cup\Omega_3,
\end{equation*}
where
\begin{equation*}
\Omega_1:=\left\{x\in\mathbb R^3:|x-x_0^+|<R\right\}\cap \left\{x\in\mathbb R^3:x_3\geq L\right\},
\end{equation*}
\begin{equation*}
\Omega_2:=\left\{x\in\mathbb R^3:x_1^2+x_2^2<R^2\right\}\cap \left\{x\in\mathbb R^3:-L\leq x_3\leq L\right\}
\end{equation*}
and
\begin{equation*}
\Omega_3:=\left\{x\in\mathbb R^3:|x-x_0^-|<R\right\}\cap \left\{x\in\mathbb R^3:x_3\leq -L\right\}.
\end{equation*}
By construction $\Omega$ is of class $C^{1,1}$ but it is not of class $C^2$. Moreover it is convex. We note that we can take ${h}=R-\delta$ for all $\delta\in]0,R[$. Hence, as in the case of the ball, we can take in \eqref{pohozaev} the vector field defined by
\begin{equation*}
F(x)=\begin{cases}
x-x_0^+, & {\rm if\ }x\in\Omega_1,\\
(x_1,x_2,0), & {\rm if\ }x\in\Omega_2,\\
x-x_0^+, & {\rm if\ }x\in\Omega_3.
\end{cases}
\end{equation*}
By construction, $F$ is a Lipschitz vector field. Standard computations show that
$$
\rho_i(x)=1,
$$
for all $x\in\Omega_1\cup\Omega_3$ and for $i=1,2,3$ and
$$
\lambda_1(x)=0,\ \ \ \lambda_2(x)=\lambda_3(x)=1,
$$
for all $x\in\Omega_2$. Hence $\left(\sum_{i=1}^{2}\rho_i(x)-1\right)\geq 0$ for all $x\in\Omega$. Then for the Steklov eigenvalues $\sigma_j$ on $\Omega$ we have $\sigma_j\leq c_2^{\frac{1}{2}}\left(\frac{j}{|\partial\Omega|}\right)^{\frac{1}{2}}$.

\section{Proof of Theorem \ref{Riesz-mean-theorem}}\label{riesz}

In this section we prove Theorem \ref{Riesz-mean-theorem}, namely we prove asymptotically sharp upper bounds for Riesz means of Steklov eigenvalues. As a consequence we provide asymptotically sharp upper bounds for the trace of the Steklov heat kernel and lower bounds for Steklov eigenvalues. 

\begin{proof}[Proof of Theorem \ref{Riesz-mean-theorem}]
For the Laplacian eigenvalues $\lambda_i$ on $\partial \Omega$ the following asymptotically sharp inequality has been shown in \cite{HaSt11}:
\begin{equation}\label{Laplacian-ev-HS-weyl-estimate}
  \sum_{j=0}^{\infty}(z-\lambda_j)_{+}^2\leq \frac{8}{(N+2)(N+4)}\,(2\pi)^{-N}B_N|\partial\Omega|(z+z_0)^{2+\frac{N}{2}}
\end{equation}
where $\displaystyle z_0:=\frac{N^2}{4}\,H_{\infty}^2$ and $H_{\infty}$ is given by \eqref{HHH}. We note that $z_0\leq c_{\Omega}^2$. It follows from the first inequality of \eqref{steklov-laplace-beltrami-ev-comparison-1} of Theorem \ref{main-theorem} that
\begin{equation}\label{laplacian-steklov-Riesz-mean-ineq}
  \sum_{j=0}^{\infty}(z-\lambda_j)_{+}\geq \sum_{j=0}^{\infty}(z-\sigma_j^2-2c_{\Omega}\sigma_j)_{+}.
\end{equation}
Defining a new variable $\zeta$ by $\displaystyle \zeta:= \sqrt{z+c_{\Omega}^2}-c_{\Omega}$ it is easily shown that \eqref{laplacian-steklov-Riesz-mean-ineq} is equivalent to
\begin{equation*}
  \sum_{j=0}^{\infty}(\zeta^2+2c_{\Omega}\zeta-\lambda_j)_{+}\geq 2(\zeta+c_{\Omega})\sum_{j=0}^{\infty}(\zeta-\sigma_j)_{+}-\sum_{j=0}^{\infty}(\zeta-\sigma_j)_{+}^2
\end{equation*}
and therefore it is equivalent to the differential inequality
\begin{equation}\label{laplacian-steklov-Riesz-mean-diff-ineq}
  \frac{d}{d\zeta}\frac{\sum_{j=0}^{\infty}(\zeta-\sigma_j)_{+}^2}{\zeta+c_{\Omega}}\leq \frac{\sum_{j=0}^{\infty}(\zeta^2+2c_{\Omega}\zeta-\lambda_j)_{+}}{(\zeta+c_{\Omega})^2}.
\end{equation}
Integrating the differential inequality \eqref{laplacian-steklov-Riesz-mean-diff-ineq} between $0$ and $\zeta$ and performing an integration by parts on the right-hand side of the resulting inequality, we obtain
\begin{equation*}
  \frac{\sum_{j=0}^{\infty}(\zeta-\sigma_j)_{+}^2}{\zeta+c_{\Omega}}\leq \frac{\sum_{j=0}^{\infty}(\zeta^2+2c_{\Omega}\zeta-\lambda_j)_{+}}{4(\zeta+c_{\Omega})^3}
  +\frac{3}{4}\int_0^{\zeta}\frac{\sum_{j=0}^{\infty}(s^2+2c_{\Omega}s-\lambda_j)^2_{+}}{(s+c_{\Omega})^4}\,ds.
\end{equation*}
We apply estimate \eqref{Laplacian-ev-HS-weyl-estimate}, replace $z_0$ by $c_{\Omega}^2$ and compute the resulting integral. We get the inequality
\begin{equation*}
  \sum_{j=0}^{\infty}(\zeta-\sigma_j)_{+}^2\leq \frac{2}{(N+2)(N+4)}\,(2\pi)^{-N}B_N|\partial\Omega|(\zeta+c_{\Omega})^{1+N}\big(1+\frac{3}{N+1}\big)
\end{equation*}
which proves the claim.
\end{proof}

Laplace transforming inequality \eqref{Steklov-ev-Riesz-mean-upper-bound} of Theorem \ref{Riesz-mean-theorem} yields the following upper bound on the trace of the heat kernel for the Steklov operator:
\begin{corollary}\label{Steklov-heat-trace-upper-bound-cor}
Let $\Omega$ be a bounded domain of class $C^2$ in $\mathbb R^{N+1}$ such that $\partial\Omega$ has only one connected component. Then
\begin{equation}\label{Steklov-heat-trace-upper-bound}
  \sum_{j=0}^{\infty}e^{-\sigma_jt}\leq \frac{1}{(N+1)(N+2)}\,(2\pi)^{-N}B_N|\partial\Omega|t^{-N}e^{c_{\Omega}t}\Gamma(N+3,c_{\Omega}t)
\end{equation}
for all $t>0$, where $\displaystyle \Gamma(a,b)=\int_b^{\infty}t^{a-1}e^{-t}\,dt$ denotes the incomplete Gamma function. 
\end{corollary}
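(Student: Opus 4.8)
The plan is to recognise the Steklov heat trace as a Laplace transform of the Riesz mean $\sum_j(z-\sigma_j)_+^2$ and then to feed the bound \eqref{Steklov-ev-Riesz-mean-upper-bound} of Theorem \ref{Riesz-mean-theorem} directly into the transform. Once the correct transform identity is isolated the argument reduces to an elementary integral computation.

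First I would establish, for each fixed $\sigma\geq 0$ and $t>0$, the identity
\[
\int_0^\infty e^{-zt}(z-\sigma)_+^2\,dz=\frac{2}{t^3}\,e^{-\sigma t}.
\]
This follows by restricting the integration to $z\geq\sigma$, substituting $u=z-\sigma$, and using the elementary moment $\int_0^\infty u^2e^{-ut}\,du=2/t^3$. Equivalently it furnishes the representation $e^{-\sigma t}=\tfrac{t^3}{2}\int_0^\infty e^{-zt}(z-\sigma)_+^2\,dz$ of a single exponential.

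Next I would sum over $j$ and interchange summation with integration, which is legitimate by Tonelli's theorem since every summand is nonnegative, obtaining
\[
\sum_{j=0}^\infty e^{-\sigma_j t}=\frac{t^3}{2}\int_0^\infty e^{-zt}\sum_{j=0}^\infty(z-\sigma_j)_+^2\,dz .
\]
Finiteness of the left-hand side for $t>0$ is ensured by the Weyl asymptotics \eqref{steklov-ev-asymptotics}. Inserting the Riesz-mean bound \eqref{Steklov-ev-Riesz-mean-upper-bound} into the integrand replaces $\sum_j(z-\sigma_j)_+^2$ by $\frac{2}{(N+1)(N+2)}(2\pi)^{-N}B_N|\partial\Omega|\,(z+c_{\Omega})^{N+2}$, leaving only the integral $\int_0^\infty e^{-zt}(z+c_{\Omega})^{N+2}\,dz$ to evaluate.

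Finally I would compute this integral by the successive substitutions $w=z+c_{\Omega}$ and $s=wt$, which turn it into $e^{c_{\Omega}t}\,t^{-(N+3)}\int_{c_{\Omega}t}^\infty s^{N+2}e^{-s}\,ds=e^{c_{\Omega}t}\,t^{-(N+3)}\Gamma(N+3,c_{\Omega}t)$. Collecting prefactors, the $\tfrac12$ from the transform identity cancels the factor $2$ in the numerator of the Riesz-mean constant and the powers of $t$ combine as $t^3\cdot t^{-(N+3)}=t^{-N}$, producing exactly the right-hand side of \eqref{Steklov-heat-trace-upper-bound}. No genuine obstacle arises: the only step deserving a word of justification is the Tonelli interchange, which positivity makes automatic, and everything else is bookkeeping of substitutions.
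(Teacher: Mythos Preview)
Your proposal is correct and is precisely the argument the paper has in mind: the paper merely states that the corollary follows by ``Laplace transforming'' inequality \eqref{Steklov-ev-Riesz-mean-upper-bound}, and what you have written is exactly this Laplace transform computation spelled out in full, with the substitutions and the Tonelli justification made explicit. There is nothing to add or correct.
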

The estimate is sharp as $t$ tends to zero since \eqref{Steklov-heat-trace-upper-bound} implies the exact bound
\begin{equation*}
  \underset{t\rightarrow 0_{+}}{\limsup}\,t^N\sum_{j=0}^{\infty}e^{-\sigma_jt}\leq (2\pi)^{-N}B_N\Gamma(N+1)|\partial\Omega|.
\end{equation*}
From \eqref{Steklov-heat-trace-upper-bound} we immediately obtain Weyl-type lower bounds on Steklov eigenvalues. Since $\displaystyle (j+1)e^{-\sigma_jt}\leq \sum_{k=0}^{\infty}e^{-\sigma_kt}$ for all $j\in\mathbb N$ and $\Gamma(N+3,c_{\Omega}t)\leq \Gamma(N+3)$ we get from \eqref{Steklov-heat-trace-upper-bound} after optimizing with respect to $t$ the following:
\begin{corollary}\label{Steklov-lower-bound} 
Let $\Omega$ be a bounded domain of class $C^2$ in $\mathbb R^{N+1}$ such that $\partial\Omega$ has only one connected component. Then for all $j\in\mathbb N$:
\begin{equation*}
  \sigma_j\geq r_N2\pi B_N^{-1/N}\left(\frac{j+1}{|\partial\Omega|}\right)^{\frac{1}{N}}-c_{\Omega}
\end{equation*}
with $\displaystyle r_N= \frac{N}{e \Gamma(N+1)^{1/N}}\leq 1$.
\end{corollary}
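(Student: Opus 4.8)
The plan is to deduce the pointwise lower bound on each $\sigma_j$ directly from the heat-trace estimate \eqref{Steklov-heat-trace-upper-bound} of Corollary \ref{Steklov-heat-trace-upper-bound-cor}, by collapsing the whole trace to a single term and then optimizing over the heat-time parameter $t>0$. First I would exploit the monotonicity $\sigma_0\leq\sigma_1\leq\cdots\leq\sigma_j$, which gives $e^{-\sigma_k t}\geq e^{-\sigma_j t}$ for every $k\leq j$ and hence the elementary bound $(j+1)e^{-\sigma_j t}\leq\sum_{k=0}^{\infty}e^{-\sigma_k t}$. Combining this with \eqref{Steklov-heat-trace-upper-bound}, using the crude estimate $\Gamma(N+3,c_{\Omega}t)\leq\Gamma(N+3)$ together with the algebraic identity $\Gamma(N+3)=(N+1)(N+2)\,\Gamma(N+1)$, the factor $(N+1)(N+2)$ cancels and one is left with
\begin{equation*}
(j+1)\,e^{-\sigma_j t}\leq (2\pi)^{-N}B_N\,\Gamma(N+1)\,|\partial\Omega|\,t^{-N}e^{c_{\Omega}t}
\end{equation*}
for all $t>0$.

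Next I would take logarithms and rearrange so that the inequality becomes a lower bound for $\sigma_j+c_{\Omega}$: writing $C:=(2\pi)^{-N}B_N\,\Gamma(N+1)\,|\partial\Omega|$, the previous display is equivalent to
\begin{equation*}
\sigma_j+c_{\Omega}\geq\frac{1}{t}\,\ln\!\left(\frac{(j+1)\,t^{N}}{C}\right),\qquad t>0.
\end{equation*}
Since this holds for every $t>0$, the sharpest bound is obtained by maximizing the right-hand side in $t$. This optimization is the one computational step requiring care: setting the derivative of $t\mapsto t^{-1}\bigl(\ln\frac{j+1}{C}+N\ln t\bigr)$ to zero yields the optimal time $t^{*}=e^{\,1-\frac1N\ln\frac{j+1}{C}}$, at which the right-hand side equals $\frac{N}{e}\bigl(\frac{j+1}{C}\bigr)^{1/N}$.

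Substituting $C$ back in, the factor $(2\pi)^{-N}$ becomes $2\pi$ under the $N$-th root and the constant reorganizes into
\begin{equation*}
\frac{N}{e}\left(\frac{j+1}{C}\right)^{1/N}=\frac{N}{e\,\Gamma(N+1)^{1/N}}\,2\pi\,B_N^{-1/N}\left(\frac{j+1}{|\partial\Omega|}\right)^{1/N}=r_N\,2\pi\,B_N^{-1/N}\left(\frac{j+1}{|\partial\Omega|}\right)^{1/N},
\end{equation*}
which after subtracting $c_{\Omega}$ is exactly the claimed inequality. Finally, to justify $r_N\leq 1$ I would invoke the elementary bound $N!\geq(N/e)^{N}$ (immediate from $e^{N}=\sum_{k\geq 0}N^{k}/k!\geq N^{N}/N!$), which is equivalent to $\Gamma(N+1)^{1/N}\geq N/e$ and hence to $r_N\leq 1$. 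The only genuinely delicate point is tracking all the multiplicative constants through the optimization so that the final coefficient matches $r_N$ exactly; the rest is monotonicity and a single elementary extremum.
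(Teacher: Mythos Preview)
Your proposal is correct and follows exactly the approach sketched in the paper: the monotonicity bound $(j+1)e^{-\sigma_j t}\leq\sum_k e^{-\sigma_k t}$, the crude estimate $\Gamma(N+3,c_\Omega t)\leq\Gamma(N+3)$ applied to \eqref{Steklov-heat-trace-upper-bound}, and optimization in $t$. You have simply supplied the details the paper omits, namely the identity $\Gamma(N+3)=(N+1)(N+2)\Gamma(N+1)$, the explicit computation of the optimal $t$, and the elementary verification of $r_N\leq 1$; all of these are carried out correctly.
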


{\bf Acknowledgments.} The first author is member of the Gruppo Nazionale per l'Analisi Matematica, la Probabilit\`{a} e le loro Applicazioni (GNAMPA) of the Istituto Nazionale di Alta Matematica (INdAM).

\bibliography{bibliography}{}
\bibliographystyle{abbrv}

\end{document}